    \newtheorem{thm}{Theorem}[section]
    \newtheorem{prop}[thm]{Propositoin}
    \newtheorem{lem}[thm]{Lemma}
    \newtheorem{cor}[thm]{Corollary}
  \theoremstyle{definition}
    \newtheorem*{que}{Question}
    \newtheorem{defi}[thm]{Definition}
  \theoremstyle{remark}
    \newtheorem{rem}[thm]{Remark}
    \newtheorem{ex}[thm]{Example}
\title{Engel Manifolds and Contact 3-Orbifolds}
\author{Koji Yamazaki}
\begin{document}
\maketitle
\begin{abstract}
In early study of Engel manifolds from R. Montgomery, the Cartan prolongation and the development map are central figures. However, the development map can be globally defined only if the characteristic foliation is ``nice". In this paper, we introduce the Cartan prolongation of a contact 3-orbifold and the development map associated to a more general Engel manifold, and we give necessary and sufficient condition for the Cartan prolongation to be a manifold. Moreover, we define the Cartan prolongation of a $3$-dimensional contact \'{e}tale Lie groupoid and the development map associated to any Engel manifold. We prove that all Engel manifolds obtained as the Cartan prolongation of a ``space" with contact structure are obtained from a contact 3-orbifold.
\end{abstract}

\setcounter{section}{-1} 
	\section{Introduction} 

 A $2$-distribution $\mathcal{D}$ on a $4$-manifold $E$ is an Engel structure if $\mathcal{D}^2 := \mathcal{D} + [\mathcal{D} , \mathcal{D}]$ has rank $3$, and $\mathcal{D}^3 := \mathcal{D}^2 + [\mathcal{D}^2 , \mathcal{D}^2]$ has rank $4$. The pair $(E , \mathcal{D})$ is called an Engel manifold. Engel manifolds are very similar and closely related to contact manifolds (with a Legendrian foliation). For example, the projectivization $\mathbb{P} (\xi)$ of any contact structure $\xi$ on a contact $3$-manifold has an Engel structure. This is called the Cartan prolongation. On the other hand, any ``nice" Engel manifold has a bundle structure over a contact manifold. Any Engel manifold has a $1$-foliation $\mathcal{L}$, called the characteristic foliation, such that $[\mathcal{L} , \mathcal{D}^2] \subset \mathcal{D}^2$. The above word ``nice" means that the leaf space $E / \mathcal{L}$ is a manifold. Then $E / \mathcal{L}$ admits a contact structure $\xi$. Moreover, $E$ has a natural Engel morphism $\phi : E \rightarrow \mathbb{P} (\xi)$ called the development map. This discussion can be easily generalized to that for orbifolds. That is, if $E / \mathcal{L}$ is a orbifold, then $E / \mathcal{L}$ admits a contact structure, and then $E$ has the development map. However, the Cartan prolongation of a contact $3$-orbifold is an Engel orbifold in general.
\begin{que}
When is the Cartan prolongation of a contact $3$-orbifold  an Engel manifold?
\end{que}
 The following theorem gives a complete answer to this question.

\setcounter{section}{3}
\setcounter{thm}{0}
\begin{thm}
 Let $(\Sigma,\xi)$ be a contact 3-orbifold. The Cartan prolongation of $(\Sigma,\xi)$ is a manifold if and only if $(\Sigma,\xi)$ is positive, and $|G_x|$ is odd for all $x \in \Sigma$, where $G_x$ is the isotropy group at $x$.
\end{thm}

 Contact orbifolds are standard objects in various fields of mathmatics and physics. If a contact manifold has a ``symmetric property" (in mathmatics, that is an action of a Lie group), then we obtain the ``quotient" called the contact reduction \cite{geiges2008introduction}. Such an object is a contact orbifold. So we are rich in examples of contact orbifolds, and we can obtain more examples of Engel manifolds. This construction is functorial.\par
 In fact, an orbifold can be defined as a proper \'{e}tale Lie groupoid. In this context, the leaf space $E / \mathcal{L}$ can be regarded as an \'{e}tale Lie groupoid.

\setcounter{section}{4}
\setcounter{thm}{23}
\begin{cor}
 Let $\mathcal{G}$ be a $3$-dimensional contact \'{e}tale Lie groupoid. The Cartan prolongation of $\mathcal{G}$ is a manifold if and only if $\mathcal{G}$ is proper, positive, and $|\mathcal{G}_x|$ is odd for all objects $x \in \mathcal{G}_0$, where $\mathcal{G}_x$ is the isotropy group at $x$.
\end{cor}
 This corollary means that all Engel manifolds obtained as the Cartan prolongation of a ``space" with contact structure are obtained as above.\par
 Furthermore, we can define the development map associated to any Engel manifold as a ``map" (called generalized map) among Lie groupoids. This means that we can ``approximate" any Engel manifold to the Cartan prolongation by the universality of the development map.\par
 If you are interested in contact manifolds rather than Engel manifolds, you can replace 
Engel manifolds (resp. the characteristic foliations, contact manifolds and the Cartan prolongations)
with 
contact manifolds with a Legendrian foliation (resp. the fixed Legendrian foliations, manifolds and the spaces of contact elements) to develop the argument in parallel. For example, the contact version of the main result (Theorem \ref{result}) is correct. Moreover, we can define the development map for contact manifolds with a Legendrian foliation. And so on.

		\subsection*{Acknowledgements} 

 This article is established thanks to much support from my advisor Hisaaki Endo and helpful discussions with Jiro Adachi. I grateful and would like to thank them.

\setcounter{section}{0}
\setcounter{thm}{0}

	\section{Engel manifolds} \label{sec1}
Any Engel manifold has an even contact structure.
An even contact structure has the {\it characteristic foliation}.
The leaf space of the characteristic foliation has a contact structure.
Conversely, a contact $3$-manifold has an Engel manifold called the {\it Cartan prolongation}.
An Engel structure determines the {\it developing map}.
This section refers to \cite{montgomery1999engel}, \cite{adachi2002engel} and \cite{vogel2009existence}.
	\subsection{Contact manifolds and even contact manifolds}
An even contact manifold is an even-dimension version of a contact manifold.
An even contact manifold has an $1$-dimensional foliation called the {\it characteristic foliation}.
The leaf space of the characteristic foliation has a contact structure.
	\subsubsection{Contact manifolds}
\begin{defi}[contact manifolds]
Let $M$ be a $(2n+1)$-dimensional manifold.
A {\it contact structure} on the manifold $M$ is a corank $1$ distribution $\xi \subset TM$ such that a $(2n+1)$-form $(\wedge^n d \alpha) \wedge \alpha$ does not vanish anywhere for any local $1$-form $\alpha$ with $\xi = \operatorname{Ker}(\alpha)$.
Then, the pair $(M, \xi)$ is called a {\it contact manifold}.
And the above $1$-form $\alpha$ is called a {\it contact form}.
\end{defi}
We define a category of contact manifolds.
We have to define a morphism between contact manifolds.
In this paper, we use a {\it local contactomorphism} as a morphism.
\begin{defi}
A {\it local diffeomorphism} $f : M_1 \rightarrow M_2$ is a smooth map $f : M_1 \rightarrow M_2$ such that $df_x : T_xM_1 \rightarrow T_{f(x)}M_2$ is an isomorphism for any $x \in M_1$. \par
Let $(M_1, \xi_1)$ and $(M_2, \xi_2)$ be contact manifolds. 
A {\it local contactomorphism} $f : (M_1, \xi_1) \rightarrow (M_2, \xi_2)$ is a local diffeomorphism $f : M_1 \rightarrow M_2$ with $df(\xi_1) \subset \xi_2$. \par
A local contactomorphism $f : (M_1, \xi_1) \rightarrow (M_2, \xi_2)$ is a {\it contactomorphism} if $f : M_1 \rightarrow M_2$ is a bijection.
\end{defi}
\begin{prop} \label{contchara}
Let $M$ be a $(2n+1)$-dimensional manifold, and let $\alpha$ be a $1$-form.
Define $\xi = \operatorname{Ker}(\alpha)$.
Then, the followings are equivalent.
\begin{enumerate}
\renewcommand{\labelenumi}{(\arabic{enumi})}
\item The distribution $\xi$ is a contact structure.
\item The $1$-form $\alpha$ is a contact form.
\item A $2n$-form $(\wedge^n d \alpha) |_\xi$ on a distribution $\xi$ does not vanish anywhere.
\item A $2$-form $d \alpha |_\xi$ on a distribution $\xi$ is nondegenerate on $\xi$.
(i.e. A linear map $\xi \rightarrow \xi^\ast ; v \mapsto d \alpha (v, \cdot)$ is a bijection.)
\end{enumerate}
\end{prop}
\begin{proof}
${\it (1)} \Leftarrow {\it (2)} \Leftrightarrow {\it (3)}$ is obvious.
${\it (3)} \Leftrightarrow {\it (4)}$ is similar to the characterization of a symplectic form.
(See \cite{mcduff2017introduction}.)
${\it (1)} \Rightarrow {\it (2)}$ is easy.
(See \cite{geiges2008introduction}.)
\end{proof}
	\subsubsection{Even contact manifolds}
\begin{defi}[even contact manifolds]
Let $E$ be a $2n$-dimensional manifold.
An {\it even contact structure} on the manifold $E$ is a corank $1$ distribution $\mathcal{E} \subset TE$ such that a $2n$-form $(\wedge^{n-1} d \alpha) \wedge \alpha$ does not vanish anywhere for any local $1$-form $\alpha$ with $\mathcal{E} = \operatorname{Ker}(\alpha)$.
Then, the pair $(E, \mathcal{E})$ is called an {\it even contact manifold}.
And the above $1$-form $\alpha$ is called an {\it even contact form}.
\end{defi}
A morphism between even contact manifolds is defined in the same way as above.
\begin{defi}
Let $(E_1, \mathcal{E}_1)$ and $(E_2, \mathcal{E}_2)$ be even contact manifolds. 
A {\it local even contactomorphism} $f : (E_1, \mathcal{E}_1) \rightarrow (E_2, \mathcal{E}_2)$ is a local diffeomorphism $f : E_1 \rightarrow E_2$ with $df(\mathcal{E}_1) \subset \mathcal{E}_2$. \par
A local even contactomorphism $f : (E_1, \mathcal{E}_1) \rightarrow (E_2, \mathcal{E}_2)$ is an {\it even contactomorphism} if $f : E_1 \rightarrow E_2$ is a bijection.
\end{defi}
\begin{prop}
Let $E$ be a $2n$-dimensional manifold, and let $\alpha$ be a $1$-form.
Define a distribution $\mathcal{E}$ as $\mathcal{E} = \operatorname{Ker}(\alpha)$.
The followings are equivalent.
\begin{enumerate}
\renewcommand{\labelenumi}{(\arabic{enumi})}
\item The distribution $\mathcal{E}$ is an even contact structure.
\item The $1$-form $\alpha$ is an even contact form.
\item A $2n$-form $(\wedge^{n-1} d \alpha) |_\mathcal{E}$ on a distribution $\mathcal{E}$ does not vanish anywhere.
\item The rank of a linear map $\mathcal{E} \rightarrow \mathcal{E}^\ast ; v \mapsto d \alpha (v, \cdot)$ is $2n-2$.
\end{enumerate}
\end{prop}
\begin{proof}
It is similar to the proof of Proposition \ref{contchara}.
\end{proof}
Let $(E, \mathcal{E})$ be an even contact manifold.
Then, a distribution $\mathcal{L}( = \operatorname{Ker}[\mathcal{E} \rightarrow \mathcal{E}^\ast])$ has rank $1$.
Any $1$-dimensional distribution is integrable because $[X, X] = 0$.
The $1$-dimensional distribution $\mathcal{L}$ defnes a foliation.
(In this paper, we refer to an integrable distribution as a foliation.)
\begin{defi}[the characteristic foliation]
The above $1$-dimensional foliation $\mathcal{L}( = \operatorname{Ker}[\mathcal{E} \rightarrow \mathcal{E}^\ast])$ is called the {\it characteristic foliation} of the even contact manifold $(E, \mathcal{E})$.
\end{defi}
\begin{prop} \label{charfolcont}
Let $M \subset E$ be a codimension $1$ submanifold intersecting transversally with the characteristic foliation $\mathcal{L}$.
Then, a distribution $\xi = TM \cap \mathcal{E}$ on the submanifold $M$ is a contact structure.
\end{prop}
\begin{proof}
Consider the following diagram.
	\[\xymatrix{
		\xi \ar[d]_-{v \mapsto d \alpha (v,\cdot)} \ar@{^(->}[r]
		& \mathcal{E} \cong \xi \oplus \mathcal{L} \ar[d]^-{v \mapsto d \alpha (v,\cdot)}
		& \mathcal{L} \ar[d]^0 \ar@{^(->}[l]
	\\
		\xi^\ast
		& \mathcal{E}^\ast \cong \xi^\ast \oplus \mathcal{L}^* \ar@{->>}[l] \ar@{->>}[r]
		& \mathcal{L}^\ast.
	}\]
The rank of the kernel of the above map $\mathcal{L} \rightarrow \mathcal{L}^\ast$ is $1$.
The rank of the kernel of the above map $\mathcal{E} \rightarrow \mathcal{E}^\ast$ is $1$.
Then, the rank of the kernel of the above map $\xi \rightarrow \xi^\ast$ is $0$.
This means that the above map $\xi \rightarrow \xi^\ast$ is a bijection.
The distribution $\xi$ is a contact structure by Proposition \ref{contchara}.
\end{proof}
\begin{prop} \label{charfolhol}
Let $X$ be a smooth vector field tangent to the characteristic foliation $\mathcal{L}$.
Let $\rho_t$ be the flow of $X$.
Then, the (locally defined) diffeomorphism $\rho_t$ preserves the even contact structure $\mathcal{E}$.
\end{prop}
\begin{proof}
Let $\alpha$ be an even contact form of $\mathcal{E}$.
(i.e. $\mathcal{E} = \operatorname{Ker}(\alpha)$.)
We will show that there exists a time-dependent function $h_t$ satisfying the following equation.
\[
\rho_t^\ast \alpha = h_t \alpha.
\]
If we calculate the Lie derivative of both sides of the above equation, we get the following equation.
\[
\rho_t^\ast \mathcal{L}_X \alpha = \frac{d}{dt} \rho_t^\ast \alpha = \left(\frac{d}{dt} h_t\right) \alpha.
\]
(Where $\mathcal{L}_X(\cdot)$ is the Lie derivative by $X$.)
From the formula $\mathcal{L}_X = d i_X + i_X d$ on Lie derivatives, we get the following.
\[
\begin{array}{rcl}
\mathcal{L}_X \alpha & = & (d i_X + i_X d) \alpha \\
& = & d \alpha (X, \cdot) \in \operatorname{Ker}[T^\ast E \rightarrow \mathcal{E}^\ast].
\end{array}
\]
(Where $i_X$ is an interior product with the vector field $X$.) \par
Now, $\langle \alpha \rangle \subset \operatorname{Ker}[T^\ast E \rightarrow \mathcal{E}^\ast]$.
Furthermore, $\langle \alpha \rangle = \operatorname{Ker}[T^\ast E \rightarrow \mathcal{E}^\ast]$ by comparing these dimensions.
There exists a smooth function $f$ such that
\[
\mathcal{L}_X \alpha = f \alpha
\]
because $\mathcal{L}_X \alpha \in \langle \alpha \rangle$.
We obtain the following equation.
\[
\left(\frac{d}{dt} h_t\right) \alpha = (f \circ \rho_t) \rho_t^* \alpha = (f \circ \rho_t) h_t \alpha.
\]
Therefore, the differential equation to be satisfied by the unknown function $h_t$ is the following.
\[
\left\{
\begin{array}{lcl}
\frac{d}{dt} h_t & = & (f \circ \rho_t) h_t, \\
h_0 & = & 1.
\end{array}
\right.
\]
This equation can be solved concretely by $h_t = \exp(\int_0^t f \circ \rho_s ds)$.
\end{proof}
Proposition \ref{charfolcont} means that any section of the characteristic foliation has a contact structure.
Proposition \ref{charfolhol} means that any holonomy of the characteristic foliation preserves the contact structure.
Roughly speaking, these together mean that the leaf space of the characteristic foliation has a contact structure.
(See section \ref{sec1.3}.)
	\subsection{Engel manifolds} \label{sec1.2}
An Engel manifold has an even contact structure.
The leaf space of the characteristic foliation has a contact structure by the previous subsection.
Conversely, a contact $3$-manifold has an Engel manifold called the {\it Cartan prolongation}.
These two functors determine an adjunction. \par
Some concrete categories are represented by the followings.
\vspace{5pt}\par
${\bf Contact^3}$ is a category of contact $3$-manifolds. \par
{\bf Engel} is a category of Engel manifolds. \par
${\bf Engel^t}$ is a category of Engel manifolds with the trivial characteristic foliation.
(See Definition \ref{trichar}.)
\vspace{5pt}\par
First, we define a basic term.
\begin{defi}
Let $\mathcal{D}$ be a smooth distribution on a manifold.
(We can regard the distribution $\mathcal{D}$ as a sheaf of some vector fields.)
Define a sheaf $\mathcal{D}^n$ as a sheafification of a presheaf $\mathcal{D'}^n$ where the presheaf $\mathcal{D'}^n$ is inductively defined as follows.
\[
\begin{array}{lcl}
\mathcal{D'}^1 & = &\mathcal{D} \\
\mathcal{D'}^{n+1} & = & [\mathcal{D}^n, \mathcal{D}^n] + \mathcal{D}^n \\
& = & \{ [X, Y] + Z \, | \, X, Y, Z \in \mathcal{D}^n \}.
\end{array}
\]
\end{defi}
	\subsubsection{Definitions}
\begin{defi}[Engel manifolds]
Let $E$ be a $4$-dimensional manifold.
An {\it Engel structure} on $E$ is a $2$-dimensional distribution $\mathcal{D} \subset TE$ such that the sheaf $\mathcal{D}^2$ is $3$-dimensional distribution and the sheaf $\mathcal{D}^3$ is $4$-dimensional distribution.
Then, the pair $(E, \mathcal{D})$ is called an {\it Engel manifold}.
\end{defi}
A morphism between Engel manifolds is defined in the same way as the previous subsection.
\begin{defi}
Let $(E_1, \mathcal{D}_1)$ and $(E_2, \mathcal{D}_2)$ be Engel manifolds. 
A {\it local Engel diffeomorphism} $f : (E_1, \mathcal{D}_1) \rightarrow (E_2, \mathcal{D}_2)$ is a local diffeomorphism $f : E_1 \rightarrow E_2$ with $df(\mathcal{D}_1) \subset \mathcal{D}_2$. \par
A local Engel diffeomorphism $f : (E_1, \mathcal{D}_1) \rightarrow (E_2, \mathcal{D}_2)$ is an {\it Engel diffeomorphism} if $f : E_1 \rightarrow E_2$ is a bijection.
\end{defi}
Let $(E, \mathcal{D})$ be an Engel manifold.
Then, the distribution $\mathcal{E} = \mathcal{D}^2$ is an even contact structure on $E$.
The even contact manifold $(E, \mathcal{E})$ has the characteristic foliation $\mathcal{L}$.
The foliation $\mathcal{L}$ is called the characteristic foliation of $(E, \mathcal{D})$.
\begin{prop}
Let $(E, \mathcal{D})$ be an Engel manifold with the characteristic foliation $\mathcal{L}$.
Then, $\mathcal{L} \subset \mathcal{D}$.
\end{prop}
\begin{proof}
Let $\mathcal{E} = \mathcal{D}^2$.
In local, represent that $\mathcal{E} = \mathcal{D} \oplus V = \operatorname{Ker}(\alpha)$ ($^\exists V$ and $^\exists \alpha$).
Let $\mathcal{L'} = \operatorname{Ker}[\alpha^\dag]$ where
\[
\alpha^\dag : \mathcal{D} \rightarrow V^\ast ; X \mapsto d \alpha (X, \cdot).
\]
Then, $\mathcal{L'} \subset \mathcal{L} \cap \mathcal{D}$.
We will show that $\mathcal{L'} = \mathcal{L}$.
To show this, just show that the rank of the distribution $\mathcal{L'}$ is at least $1$. \par
There exist two smooth vector fields $X, Y \in \Gamma (\mathcal{E})$ such that a scalar function $\alpha ([X,Y])$ does not vanish anywhere bcause the even contact structure $\mathcal{E}$ has rank $3$ and the distribution $\mathcal{E}^2 (= \mathcal{D}^3)$ has rank $4$.
(Where $\Gamma(\cdot)$ is a sheaf of sections.)
We can represent that
\[
\left\{
\begin{array}{l}
X = X_1 + u \\
Y = Y_1 + v
\end{array}
\right.
\]
where $X_1, Y_1 \in \Gamma(\mathcal{D})$ and $u, v \in \Gamma(V)$.
From the formula $d\omega(Z, W) = Z(\omega(W)) - W(\omega(Z)) - \omega([Z, W])$, we get the following.
\[
\begin{array}{lcl}
d\alpha(X_1, v) & = & -\alpha ([X_1, v]) \\
d\alpha(Y_1, u) & = & -\alpha ([Y_1, u]).
\end{array}
\]
For any point $e \in E$, $\alpha_e ([X_1,v]_e)$ or $\alpha_e ([u,Y_1]_e)$ does not vanish because
\[
\begin{array}{lcl}
0 & \neq & \alpha ([X,Y]) \\
& = & \alpha ([X_1 + u,Y_1 + v]) \\
& = & \alpha ([X_1,v]) + \alpha ([u,Y_1]) \\
& = & - d\alpha(X_1, v) - d\alpha(Y_1, u).
\end{array}
\]
We obtain that there exist two tangent vectors $X_0 \in \mathcal{D}_e$ and $u_0 \in V_e$ for any point $e \in E$ such that $d\alpha_e(X_0, u_0) \neq 0$.
This means $\alpha^\dag (X_0) \neq 0$.
The rank of $\alpha^\dag$ is at least $1$.
The rank of $\mathcal{D}$ is $2$ and the rank of $V^\ast$ is $1$.
The rank of $\mathcal{L'} = \operatorname{Ker}[\alpha^\dag]$ is $1$.
Therefore, $\mathcal{L} = \mathcal{L'} \subset \mathcal{D}$.
\end{proof}
In order to describe the correspondence with contact manifolds, we define some useful words.
\begin{defi} \label{trichar}
Let $(E, \mathcal{D})$ be an Engel manifold.
We say that the Engel manifold $(E, \mathcal{D})$ has the {\it trivial characteristic foliation} if the leaf space of the characteristic foliation is a manifold.
${\bf Engel^t}$ is a category of Engel manifolds with the trivial characteristic foliation.
\end{defi}
\begin{prop} \label{leafsp}
Let $(E, \mathcal{D})$ be an Engel manifold with the trivial characteristic foliation $\mathcal{L}$.
Let $M$ be the leaf space of the foiation $\mathcal{L}$.
Then, a distribution $\xi = \mathcal{D}^2 / \mathcal{L}$ is well-defined on $M$, and the distribution $\xi$ is a contact structure on $M$.
\end{prop}
\begin{proof}
It is obvious by Proposition \ref{charfolcont} and Proposition \ref{charfolhol}.
\end{proof}
	\subsubsection{Cartan prolongations and developing maps}
Let $(M, \xi)$ be a contact 3-manifold.
Define a manifold $E$ as $E (= \mathbb{P} (\xi)) = \displaystyle{\coprod_{x \in M} } \mathbb{P} (\xi_x) \times \{ x \}$.
(Where $\mathbb{P}(V)$ is a projectization of a vector space $V$.
i.e. $\mathbb{P}(V) = \{ \mbox{dimension $1$ subspaces of $V$} \}$)
And let $\pi : E \rightarrow M$ be the projection. \par
We define a $2$-dimensional distribution $\mathcal{D}$ on the manifold $E$ in the following way. 
For each $(l, x) \in E$ with $\pi (l, x) = x$, $l \subset \xi_x$ is a dimension $1$ subspace of $\xi_x$.
Define a distribution $\mathcal{D}$ as $\mathcal{D}_{(l, x)} = d \pi_{(l, x)}^{-1} (l) \subset T_{(l, x)} E$. \\
Define a manifold $E'$ as $E' = \displaystyle{\coprod_{x \in M} } (\xi_x ? \mathbb{R}_{>0}) \times \{ x \}$.
Then, there exists a $2$-covering map $E' \rightarrow E$.
Let $\mathcal{D'}$ be a distribution on the manifold $E'$, defined as a pull-back of the distribution $\mathcal{D}$ by the covering map $E' \rightarrow E$.
\begin{lem} \label{Cartanlem}
\mbox{}
\begin{itemize}
\item The above distribution $\mathcal{D}$ is an Engel structure on the manifold $E$.
\item The distribution $\mathcal{D}^2$ coincides with the distribution $d\pi^{-1}(\xi)$.
\item The distribution $\operatorname{Ker}(d\pi)$ is the characteristic foliation of the Engel manifold $(E, \mathcal{D})$.
\end{itemize}
\end{lem}
\begin{proof}
In local, represent that $\xi = \langle X, Y \rangle$ where $X$ and $Y$ are smooth vector fields.
Then, the following map is a diffeomorphism.
\[
M \times S^1 \rightarrow E ; (x, [\theta]) \mapsto <\cos\left(\frac{\theta}{2}\right) X_x + \sin\left(\frac{\theta}{2}\right) Y_x>
\]
By this identification, the followings are vector fields on the manifold $E$.
\[
\begin{array}{lcl}
u & = & \cos\left(\frac{\theta}{2}\right) X + \sin\left(\frac{\theta}{2}\right) Y, \\
v & = & - \sin\left(\frac{\theta}{2}\right) X + \cos\left(\frac{\theta}{2}\right) Y
\end{array}
\]
(These are not well-defined in global, but these are well-defined in local.) \par
Then, 
\[
\begin{array}{lclclcl}
\mathcal{D} & = & \langle \partial_\theta, u \rangle, \\
\mathcal{D}^2 & = & \langle \partial_\theta, u, v \rangle & = & \langle \partial_\theta, X, Y \rangle & = & d \pi^{-1} (\xi), \\
\mathcal{D}^3 & = & d \pi^{-1} (TM) & = & TE.
\end{array}
\]
Therefore, the distribution $\mathcal{D}$ is an Engel structure on $E$.
Moreover, the distribution $\operatorname{Ker}(d \pi) (= \langle \partial_\theta \rangle)$ is the characteristic foliation.
\end{proof}
\begin{defi}[the Cartan prolongation]
The above Engel manifold $(E, \mathcal{D})$ (resp. $(E', \mathcal{D'})$) is called the {\it Cartan prolongation} (resp. {\it oriented Cartan prolongation}) of the contact manifold $(M, \xi)$.
This is denoted as $\mathbb{P}(M, \xi)$ (resp. $\mathbb{S}(M, \xi)$).
\end{defi}
The construction $(M, \xi) \mapsto \mathbb{P}(M, \xi)$ defines a functor $\mathbb{P} : {\bf Contact^3} \rightarrow {\bf Engel^t}$. \par
Let $\mathbb{L} : {\bf Engel^t} \rightarrow {\bf Contact^3}$ be a functor mapping each Engel manifold to the leaf space of the characteristic foliation.
Then, there exists an adjunction $\mathbb{L} \dashv \mathbb{P} : {\bf Engel^t} \rightarrow {\bf Contact^3}$.
The unit map $\phi : Id \rightarrow \mathbb{P} \circ \mathbb{L}$ of the adjunction $\mathbb{L} \dashv \mathbb{P}$ is called the {\it developing map}.
We will now construct the developing map in concrete.
\vspace{5pt}\par
Let $(E, \mathcal{D})$ be an Engel manifold with the trivial characteristic foliation $\mathcal{L}$.
Let $(M, \xi) = \mathbb{L}(E, \mathcal{D})$, and let $\pi : E \rightarrow M$ be the projection.
Define $\phi : E \rightarrow \mathbb{P}(\xi)$ as $\phi(e) = d\phi(\mathcal{D}_e)$ for $e \in E$.
\begin{lem} \label{lemdev}
The above map $\phi$ is a local Engel diffeomorphism.
\end{lem}
\begin{proof}
Take any $e \in E$, and take a foliated chart $(U; t, \bm{x})$ around $e$ with respect to the foliation $\mathcal{L}$.
(i.e. $U \cong \mathbb{R} \times \mathbb{R}^3 \ni (t, \bm{x})$ and $\mathcal{L} = \langle \partial_t \rangle$.)
Moreover, we can assume that $(\pi(U); \bm{x})$ is a chart of a manifold $M$ around $\pi(e) \in M$.
(Remark that we can assume $\pi(U) \cong \mathbb{R}^3$ because the holonomy group at the point $e$ is trivial.)
Take a vector field $u'$ on $U$ and vector fields $X$, $Y$ on $\pi(U)$ such that
\[
\begin{array}{lcl}
\mathcal{D} & = & \langle \partial_t, u' \rangle, \\
\xi & = & \langle X, Y \rangle.
\end{array}
\]
If necessary, we can assume that the vector field $u'$ is tangent to $T\mathbb{R}^3$ by replacing it with a linear transformation.
We can represent the vector field $u'$ as $u' = fX+gY$ where $f$ and $g$ are scalar functions.
We can assume that $\sqrt{f^2 + g^2} = 1$ by the normalization.
By the same identification $\mathbb{P}(\pi(U), \xi) \cong \mathbb{R}^3 \times S^1$ for a contact manifold $(\pi(U), \xi)$ as in Lemma \ref{Cartanlem}, $\phi : U \rightarrow \mathbb{P}(\pi(U), \xi)$ can be represented as follows.
\[
\phi (t, \bm{x}) = (\bm{x}, \theta (t, \bm{x})).
\]
Where $\theta : U \rightarrow S^1$ is a $S^1$-valued function such that $f = \cos(\theta /2) , g = \sin(\theta /2)$.
(Remark, $\theta$ has a lift function $U \rightarrow \mathbb{R}$ because $U$ is contractible.)
$(\partial_t f)X + (\partial_t g)Y (= [\partial_t ,u])$ does not vanish anywhere.
Then, $(\partial_t \theta)^2 (= 4(\partial_t f)^2 + 4(\partial_t g)^2)$ does not vanish anywhere.
This means that $\partial_t \theta$ does not vanish anywhere.
The map $\phi$ is a local diffeomorphism because $det(d \phi) (= \partial_t \theta)$ does not vanish anywhere. \par
$d \phi (\partial_t) (= d \theta (\partial_t))$ is tangent to $TS^1$.
In particular, this is included in the Engel structure $\langle \partial_\theta, u \rangle$ on the manifold $\mathbb{P}(\pi(U), \xi)$, which is the same description as Lemma \ref{Cartanlem}.
Moreover, $d \phi (u') = u+d \theta (u')$ is in the Engel structure $\langle \partial_\theta, u \rangle$.
Therefore, $\phi$ is a local Engel diffeomorphism.
\end{proof}
\begin{prop} \label{univdev}
Let $(N , \xi_N)$ be a contact $3$-manifold, and let $\psi : E \rightarrow \mathbb{P} (\xi_N)$ be an local Engel diffeomorphism.
Then, there exists an unique local contactomorphism $\tilde{\psi} : M \rightarrow N$ such that the following diagram is commutative.
	\[\xymatrix{
		E \ar[r]^-{\phi} \ar[rd]_-{\psi}
		& \mathbb{P} (M,\xi) \ar[d]^-{\mathbb{P} \tilde{\psi}}
	\\
		& \mathbb{P} (N,\xi_N)
	}\]
\end{prop}
\begin{proof}
Let $\tilde{\psi} (= \mathbb{L} \psi)$ be the induced map by the functor $\mathbb{L}$ from $\psi$.
For any $e \in E$,
\[
\begin{array}{lcl}
\mathbb{P} \tilde{\psi} \circ \phi (e) &=& d \tilde{\psi} \circ d \pi (\mathcal{D}_e) \\
&=& d \pi_N \circ d \psi (\mathcal{D}_e) \\
&=&  d \pi_N  ((\mathcal{D}_N)_{\psi (e)}) \\
&=& \psi (e)
\end{array}
\]
Conversely, suppose that there exists a $\tilde{\psi'}$ such that the above diagram is commutative.
Then, the following diagram is commutative.
	\[\xymatrix{
		(M, \xi) \ar@{=}[r] \ar[rd]_-{\tilde{\psi}}
		& (M, \xi) \ar[d]^-{\tilde{\psi'}}
	\\
		& (N, \xi_N)
	}\]
We obtain $\tilde{\psi'} = \tilde{\psi}$.
This means that such the local contactomorphism $\tilde{\psi}$ is unique.
\end{proof}
\begin{cor}
The functor $\mathbb{P} : {\bf Contact^3} \rightarrow {\bf Engel^t}$ is fully faithful.
\end{cor}
\begin{proof}
It is obvious by the counit of the adjunction $\mathbb{L} \dashv \mathbb{P}$ is an isomorphism.
(See \cite{mac2013categories}.)
\end{proof}
	\section{Contact Orbifolds} 

There exists a orbiifold structure on the leaf space of a foliation such that the quotient map is proper submersion if and only if all leaves are compact and the holonomy groupoid is locally finite.
(i.e. for any leaf $L \subset E$ and any object $x,y \in L$, the morphism set $Hol(E,\mathcal{L}) (x,y) = \{ hol(\gamma) | x = \gamma (0) , y = \gamma (1) \}$ is a finite set.)
In this section, we generalize the contents of section \ref{sec1}.
\begin{defi} \label{propercf}
Let $(E, \mathcal{D})$ be an Engel manifold.
We say that the Engel manifold $(E, \mathcal{D})$ has the {\it proper characteristic foliation} if the leaf space of the characteristic foliation is an orbifold.
${\bf Engel^p}$ is a category of Engel manifolds with the proper characteristic foliation.
\end{defi}
\begin{rem}
The above word {\it proper} means that the holonomy groupoid is {\it proper}. 
(cf. Definition \ref{defproper}.)
\end{rem}
Now, we define a contact orbifold.
\begin{defi}
Let $\Sigma$ be an orbifold, 
A {\it contact structure} on $\Sigma$ is a family of contact structures on each chart such that all local group actions are contact actions, and all transformation maps are contactomorphisms.
The pair $(\Sigma, \xi)$ of an orbifold $\Sigma$ and a contact structure $\xi$ on the orbifold $\Sigma$ is called a {\it contact orbifold}. 
(cf. Definition \ref{contLiegrpd})\\
Let $(\Sigma_1, \xi_1)$, $(\Sigma_2, \xi_2)$ be contact orbifolds.
A {\it local contactomorphism} $f : (\Sigma_1, \xi_1) \rightarrow (\Sigma_2, \xi_2)$ (cf. Definition \ref{contmor}) is a local diffeomorphism $f : \Sigma_1 \rightarrow \Sigma_2$ with the lifts are contact morphisms. \\
Similarly, we define an {\it Engel structure} on an orbifold, an {\it Engel orbifold}, and a {\it local Engel diffeomorphism}.
\end{defi}
In analogy with Proposition \ref{leafsp}, we just obtain the following proposition.
\begin{prop}
Let $(E, \mathcal{D})$ be an Engel manifold with the proper characteristic foliation $\mathcal{L}$.
The leaf space $\Sigma$ of the foiation $\mathcal{L}$ has a contact structure.
\end{prop}
Let $(\Sigma, \xi)$ be a contact 3-orbifold.
Then, $\mathbb{P} (\xi)$ and $\mathbb{S} (\xi)$ are orbifolds.
By Lemma \ref{Cartanlem}, these are Engel orbifolds.
\begin{defi}[the Cartan prolongation]
The above Engel orbifold $(E, \mathcal{D})$ (resp. $(E', \mathcal{D'})$) is called the {\it Cartan prolongation} (resp. {\it oriented Cartan prolongation}) of the contact orbifold $(\Sigma, \xi)$.
This is denoted as $\mathbb{P}(\Sigma, \xi)$ (resp. $\mathbb{S}(\Sigma, \xi)$).
\end{defi}
Let $(E , \mathcal{D})$ be an Engel manifold with the proper characteristic foliation, let $(\Sigma, \xi)$ be the leaf space of the characteristic foliation, and let $\pi : E \rightarrow \Sigma$ be the quotient map.
We define $\phi : E \rightarrow \mathbb{P} (\xi)$ as $\phi (e) = d \pi (\mathcal{D}_e) \subset \xi_{\pi (e)}$.
By Lemma \ref{lemdev} and Proposition \ref{univdev}, we obtain the following proposition immediately.
\begin{prop}
The above $\phi$ is a local Engel diffeomorphism $(E,\mathcal{D}) \rightarrow \mathbb{P} (\Sigma,\xi)$. 
Moreover, this satisfies the universality: 
For any contact 3-orbifold $(\Upsilon,\nu)$ and any Engel morphism $\psi : (E,\mathcal{D}) \rightarrow \mathbb{P} (\Upsilon,\nu)$, there exists a unique up to isomorphic contact morphism $\tilde{\psi} : (\Sigma,\xi) \rightarrow (\Upsilon,\nu)$ such that $\psi \cong \mathbb{P} (\tilde{\psi}) \circ \phi$.
\end{prop}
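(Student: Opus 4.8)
The plan is to reduce the statement to the manifold case (\ref{development}) by working in uniformizing orbifold charts, in which the characteristic foliation becomes trivial and the classical development map applies verbatim. First I would fix $x \in \Sigma$ and choose, using (\ref{local}), an orbifold chart $(V,G,p)$ around $x$ with $G \cong G_x$. Writing $U \eqdef p(V)$, the characteristic foliation of $E$ restricted to $\pi^{-1}(U)$ is proper, so there is a finite $G$-cover $\hat{E} \to \pi^{-1}(U)$ on which the holonomy becomes free; that is, $\hat{E}$ is an Engel manifold with trivial characteristic foliation, leaf space $V$, and quotient map $\hat{\pi} : \hat{E} \to V$. On $\hat{E}$ the contact structure is $\xi_V$, and $\phi$ lifts to the classical development map $\hat{\phi} : \hat{E} \to \mathbb{P}(V,\xi_V)$ of (\ref{development}). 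Thus locally $\phi$ is an Engel morphism, and it remains to organize these local maps into an orbifold morphism.

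Second I would verify $G$-equivariance and compatibility. Since $G \curvearrowright V$ is by contact morphisms, the functoriality in (\ref{Cartan}) gives a lifted action $G \curvearrowright \mathbb{P}(\xi_V)$, and $G$ acts on $\hat{E}$ covering its action on $V$. Because $\hat{\phi}$ is characterized by $\hat{\pi} = \pi_V \circ \hat{\phi}$ together with $\hat{\phi}(e) = d\hat{\pi}(\mathcal{D}_e)$, and both $\hat{\pi}$ and $\mathcal{D}$ are $G$-invariant, $\hat{\phi}$ is $G$-equivariant. Hence $\{\hat{\phi}\}$ furnishes a lifting datum for an orbifold Engel morphism $\phi$; the equivalence of liftings on a connected overlap follows from the manifold uniqueness in (\ref{development}), since any two local development maps there agree after inserting the transition isomorphisms.

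For the universality I would proceed chart by chart. Given $\psi : (E,\mathcal{D}) \to \mathbb{P}(\Upsilon,\nu)$, lifting to a uniformizing chart yields an Engel morphism $\hat{E} \to \mathbb{P}(\hat{W},\hat{\nu})$ into a uniformizing chart of $\Upsilon$; by the manifold universality in (\ref{development}) this produces a unique contact morphism $\hat{\psi} : (V,\xi_V) \to (\hat{W},\hat{\nu})$ (namely $\mathbb{L}$ of the lift of $\psi$) with $\mathbb{P}(\hat{\psi}) \circ \hat{\phi}$ equal to the lift of $\psi$. The induced homomorphism on isotropy groups is forced by the local homomorphism of $\psi$, so these data assemble into a lifting of an orbifold contact morphism $\tilde{\psi}$, and $\psi \cong \mathbb{P}(\tilde{\psi}) \circ \phi$ holds chartwise, hence globally. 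For uniqueness, any competitor agrees with $\tilde{\psi}$ on every uniformizing cover by the manifold uniqueness, and the sole remaining freedom—the choice of preimage $\tilde{x} \in p^{-1}(x)$ and the resulting conjugation of $G_x$—is exactly an isomorphism of liftings; this is precisely what the clause ``unique up to isomorphic'' records.

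The main obstacle I anticipate is not analytic but organizational: confirming that the locally defined $\hat{\phi}$ and $\hat{\psi}$ satisfy the equivalence-at-a-point condition on chart overlaps, so that they define genuine orbifold maps rather than unrelated collections of chart maps. This amounts to tracking the finite-group equivariance through the functors $\mathbb{P}$ and $\mathbb{L}$ and checking that the local homomorphisms on isotropy groups match the transition data—routine, but the only place where the orbifold structure (as opposed to the manifold case of (\ref{development})) does real work.
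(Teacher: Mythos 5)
Your proposal is correct and follows the same route the paper takes: the paper offers no written argument beyond the remark that the proposition follows immediately from the manifold case (\ref{development}), and your chart-by-chart reduction --- uniformizing covers trivializing the characteristic foliation, $G$-equivariance of the local development maps, and the manifold universality applied in each chart --- is exactly the elaboration of that one-line assertion. The point you flag as the only real work (compatibility of liftings on overlaps and the conjugation ambiguity accounting for ``unique up to isomorphic'') is indeed the content the paper leaves implicit.
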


\begin{defi} 
The above $\phi$ is called the {\it development map} associated to $(E,\mathcal{D})$.
\end{defi}

At last, we describe {\it equivariant Darboux's Theorem}. \\
$\xi_{std} = Ker(dz+x dy - y dx)$ is a contact structure on $\mathbb{R}^3$.
We define $\phi_n , \psi : \mathbb{R}^3 \rightarrow \mathbb{R}^3$ as $\phi_n (x,y,z) = (x \cos(\frac{2 \pi}{n}) - y \sin(\frac{2 \pi}{n}) , x \sin(\frac{2 \pi}{n}) + y \cos(\frac{2 \pi}{n}) , z) , \psi (x,y,z) = (x,-y,-z)$.
$G_{n,std} = <\phi_n , \psi> \,^\curvearrowright (\mathbb{R}^3 , \xi_{std})$ and $H_{n,std} = <\phi_n> \,^\curvearrowright (\mathbb{R}^3 , \xi_{std})$ are contact actions of finite groups.
These are called {\it standard model}.
In fact, any contact 3-orbifold is locally isomorphic to a standard model. \\
Let $(\Sigma, \xi)$ be a contact 3-orbifold, and let $x \in \Sigma$.
We can take an orbifold chart $(V, G_x, p)$ around $x$.
In fact, we can take a metric on $V$ invariable for the action $G_x \,^\curvearrowright V$.
(For example, take a metric, and average it for the action $G_x \,^\curvearrowright V$.)
Then, $G_x \,^\curvearrowright T_x V$ preserves the metric.
Hence, $G_x \subset O(3)$.
$G_x \,^\curvearrowright T_x V$ preserves the rank $2$ subspace $\xi_x \subset T_x V$.
Then, $G_x = G_{n,std}$ or $G_x = H_{n,std}$.
By {\it Moser's trick}, we can prove the following theorem.
(See \cite{Daniel2013} or \cite{geiges2008introduction}.)
\begin{thm} \label{Darboux} 
Let $(\Sigma, \xi)$ be a contact 3-orbifold, and let $x \in \Sigma$.
There exists an orbifold chart $(V, G, p)$ around $x$ such that $(V, G)$ is isomorphic to an open neighborhood of $0 \in (\mathbb{R}^3,H)$ where $H = G_{n,std}$ or $H = H_{n,std}$.
\end{thm}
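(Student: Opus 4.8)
The plan is to prove an \emph{equivariant Darboux theorem} at a fixed point and then identify the linearized action with one of the two standard models. First I would use Remark \ref{local} to choose an orbifold chart $(V,G_x,p)$ around $x$ whose group is the isotropy group $G_x$ and in which a single lift $\tilde x \in p^{-1}(x)$ is fixed by all of $G_x$. Averaging an arbitrary Riemannian metric over the finite group produces a $G_x$-invariant metric $g$ on $V$, and its geodesic exponential $\exp_{\tilde x} : (T_{\tilde x}V, g_{\tilde x}) \to V$ is a $G_x$-equivariant diffeomorphism near the origin intertwining the isotropy representation $G_x \to O(T_{\tilde x}V)$ with the action on $V$. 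Passing to these normal coordinates, I may assume $V$ is a neighborhood of $0$ in $\mathbb{R}^3$ with $\tilde x = 0$ and $G_x$ acting linearly through $O(3)$.

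Next I classify this linear action. It preserves the contact plane $\xi_0 \subset \mathbb{R}^3$, hence also its $g$-orthogonal complement $\ell = \xi_0^{\perp}$, so $G_x$ sits inside $O(2)\times O(1)$ acting on $\xi_0 \oplus \ell$. Each $h \in G_x$ is a contactomorphism, so $h^*\alpha = \lambda_h \alpha$ for a defining contact form $\alpha$; comparing the action on the symplectic plane $(\xi_0, d\alpha|_{\xi_0})$ with that on the Reeb line $\ell$ forces the co-orientation sign $\lambda_h$ to match the determinant of $h$ on $\xi_0$. A rotation of $\xi_0$ must pair with $+1$ on $\ell$ and a reflection with $-1$ on $\ell$, while the two mixed possibilities fail the contact condition. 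Since the image of $G_x$ in $O(2)$ is a finite subgroup it is cyclic or dihedral, and as any finite subgroup of the linear symplectic group of $\xi_0$ is conjugate into its rotation group, I can normalize coordinates so the rotations become $\phi_n$ and a reflection becomes $\psi$. Thus $G_x = H_{n,std}$ when no co-orientation-reversing element occurs and $G_x = G_{n,std}$ otherwise.

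It remains to straighten $\xi$ to $\xi_{std}$ equivariantly. After one further linear (hence $G_x$-equivariant and contact) change of coordinates I arrange that $\xi$ and $\xi_{std}$ share the same $1$-jet at $0$. I then run Moser's trick at the level of \emph{contact structures}: interpolate a $G_x$-invariant family $\xi_t$ from $\xi_{std}$ to $\xi$, solve the associated Moser equation for a time-dependent vector field $X_t$ vanishing at $0$, and integrate it to a germ of diffeomorphism fixing $0$ and carrying $\xi$ to $\xi_{std}$. Because $G_x$ preserves each $\xi_t$, averaging $X_t$ over $G_x$ yields a $G_x$-equivariant solution with the same properties, whose flow is $G_x$-equivariant; composing this flow with $p$ produces the desired chart realizing $(V,G_x)$ as a neighborhood of $0$ in $(\mathbb{R}^3,H)$ with $H = G_{n,std}$ or $H_{n,std}$.

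The delicate point, and the one distinguishing this orbifold statement from the classical Darboux theorem, is the equivariance in the Moser step. Since $\psi^*\alpha = -\alpha$, the contact \emph{form} is not $G_x$-invariant when a co-orientation-reversing element is present; only the kernel $\xi$ is. I must therefore formulate and solve Moser's equation intrinsically in terms of the co-orientation-independent datum $\xi_t$ (equivalently, the conformal class of $\alpha$) rather than a fixed $\alpha$, so that averaging a solution over $G_x$ again produces a solution. Verifying that the averaged vector field still solves the interpolation equation, i.e.\ that the whole construction is genuinely natural under the possibly co-orientation-reversing action, is the crux of the argument.
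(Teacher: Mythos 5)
Your proposal follows essentially the same route as the paper: average a metric to linearize the isotropy action into $O(3)$ preserving $\xi_x$, identify the group with $G_{n,std}$ or $H_{n,std}$, and finish with an equivariant Moser argument (the paper only sketches this, deferring the Moser step to the cited references). Your additional care about the co-orientation-reversing case --- running Moser on the plane field $\xi_t$ rather than a fixed form $\alpha$, since $\psi^*\alpha = -\alpha$ --- is a correct and welcome elaboration of a point the paper passes over.
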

	\section{Main Result}
In general, the Cartan prolongation of a contact 3-orbifold is an Engel orbifold.
The main result in this paper is a necessary and sufficient condition for the Cartan prolongation to be a manifold.

\begin{thm} \label{result} 
Let $(\Sigma,\xi)$ be a contact 3-orbifold.,
\begin{enumerate}
\renewcommand{\labelenumi}{(\arabic{enumi})}
\item $\mathbb{P} (\Sigma,\xi)$ is a manifold if and only if $(\Sigma,\xi)$ is positive, and $|G_x|$ is odd for all $x \in \Sigma$.
\item $\mathbb{S} (\Sigma,\xi)$ is a manifold if and only if $(\Sigma,\xi)$ is positive.
\end{enumerate}
\end{thm}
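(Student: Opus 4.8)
The plan is to reduce everything to the equivariant Darboux models of Theorem \ref{Darboux} and then to compute, in each model, the induced isotropy of the finite group on the projectivized (resp.\ spherized) contact plane. Being a smooth manifold is a local, isotropy-theoretic condition: by Example \ref{projective} the orbifold $\mathbb{P}(\Sigma,\xi)$ is covered by charts $(V\times\mathbb{RP}^1,G_x,q)$, where $G_x$ acts diagonally — on $V\subset\mathbb{R}^3$ by the contact action and on the fibre $\mathbb{RP}^1=\mathbb{P}(\xi_v)$ by the induced projective action. Hence $\mathbb{P}(\Sigma,\xi)$ is a manifold if and only if all its orbifold isotropy groups are trivial, i.e.\ for every $x$, every $v\in V$ and every line $\ell\in\mathbb{P}(\xi_v)$ the stabiliser $\{g\in G_x : gv=v,\ dg_v(\ell)=\ell\}$ is trivial; the same reformulation holds for $\mathbb{S}(\Sigma,\xi)$ with $\mathbb{RP}^1$ replaced by the fibre $S^1=\mathbb{S}(\xi_v)$. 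I would stress here that it is genuine triviality of the isotropy that is required, not merely that the underlying space be a topological manifold: the quotient of $\mathbb{R}^2$ by $-\mathrm{id}$ is topologically $\mathbb{R}^2$ but carries a $\mathbb{Z}/2$ quotient singularity, and the Engel distribution lives in the smooth category.

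By Theorem \ref{Darboux} it then suffices to treat the two standard models. Positivity of $(\Sigma,\xi)$ means precisely that every local model is the cyclic one $H_{n,std}=\langle\phi_n\rangle$ (the isotropy preserves the coorientation of $\xi$, equivalently the orientation of the plane field), while non-positivity is exactly the presence of a model $G_{n,std}=\langle\phi_n,\psi\rangle$ containing the coorientation-reversing element $\psi$. In the positive model the only points of $V$ with nontrivial stabiliser lie on the rotation axis $\{x=y=0\}$, where $\xi$ is the $xy$-plane and $\phi_n$ acts by rotation through $2\pi/n$; the crux is the induced action on the fibre. On $\mathbb{S}(\xi)=S^1$ this is honest rotation by $2\pi/n$, so every nontrivial power $\phi_n^k$ ($1\le k<n$) rotates by $2\pi k/n\notin 2\pi\mathbb{Z}$ and fixes no direction, whence all stabilisers are trivial and $\mathbb{S}(\Sigma,\xi)$ is a manifold whenever $(\Sigma,\xi)$ is positive. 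On $\mathbb{P}(\xi)=\mathbb{RP}^1=\mathbb{R}/\pi\mathbb{Z}$ the same rotation becomes translation by $2\pi/n$, which fixes a line exactly when $2k/n\in\mathbb{Z}$: for $n$ odd this forces $k\equiv 0$, so the action is free, whereas for $n$ even the element $\phi_n^{n/2}$ (rotation by $\pi$) acts as the identity on every line and hence stabilises every point over the axis. Since $|G_x|=n$ in the positive case, this gives that $\mathbb{P}$ is a manifold over a positive chart if and only if $|G_x|$ is odd.

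It remains to show that non-positivity always obstructs both prolongations, yielding the necessity of positivity in (1) and (2). For this I would use the reflection $\psi\in G_{n,std}$, whose fixed locus in $\mathbb{R}^3$ is the $x$-axis; as a contact morphism it preserves $\xi$, and a direct check shows $d\psi$ acts on $\xi_{(x_0,0,0)}$ with eigenvalues $+1,-1$, i.e.\ as a reflection of the plane. Such a reflection fixes both eigenlines in $\mathbb{RP}^1$ and the two unit vectors of the $+1$-eigenline in $S^1$, so in either prolongation there is a point with stabiliser $\{1,\psi\}\neq 1$. Hence neither $\mathbb{P}(\Sigma,\xi)$ nor $\mathbb{S}(\Sigma,\xi)$ is a manifold once some $G_x=G_{n,std}$, establishing necessity of positivity; combined with the previous paragraph this proves both equivalences (for $\mathbb{P}$ one needs positivity and $|G_x|$ odd; for $\mathbb{S}$ positivity alone).

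The main obstacle I anticipate is delicate bookkeeping rather than a deep idea: pinning down the induced isotropy on the fibre over the singular axes and verifying that no fixed lines or directions are overlooked, in particular the clean separation between the $\mathbb{RP}^1$ computation — where rotation by $\pi$ acts trivially and produces the parity condition — and the $S^1$ computation, where no nontrivial rotation fixes a direction and hence no parity condition appears. A secondary point to state carefully is the equivalence between positivity and "every local model is $H_{n,std}$,'' since this is precisely what allows Theorem \ref{Darboux} to translate the global hypotheses of the theorem into the two model computations above.
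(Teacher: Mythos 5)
Your proposal is correct and follows essentially the same route as the paper: reduce to the standard models $G_{n,std}$ and $H_{n,std}$ via the equivariant Darboux theorem, then check freeness of the induced action on the fibre over the singular axis, where rotation by $2\pi k/n$ descends to $\mathbb{RP}^1$ trivially exactly when $2k\equiv 0 \bmod n$ (giving the parity condition for $\mathbb{P}$) and the reflection $\psi$ fixes the line $\langle\partial_x\rangle$ (obstructing the non-positive case). Your explicit remark that smooth triviality of the isotropy, not topological manifoldness of the quotient, is the relevant condition is a point the paper leaves implicit, but the argument is the same.
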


\begin{defi} 
A contact orbifold $(\Sigma,\xi)$ is {\it positive} if all $G_x \,^\curvearrowright \xi_x$ preserve the orientation.
\end{defi}

\begin{ex}
The quotient space of $G_{n,std} \,^\curvearrowright \mathbb{R}^3$ is not positive. And the quotient space of $H_{n,std} \,^\curvearrowright \mathbb{R}^3$ is positive.
\end{ex}

Because of Theorem \ref{Darboux}, we have to only consider standard models. That is to say, Theorem \ref{result} follows the following lemma.

\begin{lem} \leavevmode 
\begin{enumerate}
\renewcommand{\labelenumi}{(\arabic{enumi})}
\item The actions $G_{n,std} \,^\curvearrowright \mathbb{P} (\xi_{std}) , G_{n,std} \,^\curvearrowright \mathbb{S} (\xi_{std})$ are not free.
\item The action $H_{n,std} \,^\curvearrowright \mathbb{P} (\xi_{std})$ is free if and only if $n$ is odd.
\item The action $H_{n,std} \,^\curvearrowright \mathbb{S} (\xi_{std})$ is free for any $n$.
\end{enumerate}
\end{lem}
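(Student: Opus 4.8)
The plan is to reduce freeness of each action to a fibrewise computation and then to a pair of elementary arithmetic conditions. A point of $\mathbb{P}(\xi_{std})$ (resp.\ $\mathbb{S}(\xi_{std})$) is a pair $(p,\ell)$ with $p \in \mathbb{R}^3$ and $\ell \subset \xi_p$ a line (resp.\ ray); since each generator is a contact transformation, $dg_p$ preserves $\xi_p$ whenever $g\cdot p = p$, and $g$ acts by $g\cdot(p,\ell) = (g\cdot p,\, dg_p(\ell))$. Thus $g$ fixes $(p,\ell)$ if and only if $g\cdot p = p$ and $dg_p(\ell) = \ell$, so the action is free precisely when, for every nonidentity $g$ and every $p$ fixed by $g$, the linear map $dg_p$ fixes no line (resp.\ ray) in $\xi_p$. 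First I would fix the identifications $\mathbb{P}(\xi_p) \cong \mathbb{R}/\pi\mathbb{Z}$ and $\mathbb{S}(\xi_p) \cong \mathbb{R}/2\pi\mathbb{Z}$ by the angle of a chosen orthonormal frame, under which a rotation of $\xi_p$ by angle $\beta$ becomes translation by $\beta$; such a translation has a fixed point exactly when it is the identity, i.e.\ when $\beta \in \pi\mathbb{Z}$ (resp.\ $\beta \in 2\pi\mathbb{Z}$).

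For statements (2) and (3) I would analyse $H_{n,std} = \langle \phi_n \rangle$. Every nonidentity element is $\phi_n^k$ with $k \not\equiv 0 \pmod n$, and its fixed-point set in $\mathbb{R}^3$ is exactly the $z$-axis, along which $\xi_p = \langle \partial_x, \partial_y \rangle$ and $d(\phi_n^k)_p$ is the rotation by $2\pi k/n$. Under the identifications above this descends to translation by $2\pi k/n$ on the fibre circle, which is trivial (hence has fixed points) on $\mathbb{R}/\pi\mathbb{Z}$ iff $n \mid 2k$, and on $\mathbb{R}/2\pi\mathbb{Z}$ iff $n \mid k$. In the sphere case $k \not\equiv 0 \pmod n$ never gives $n \mid k$, so no nonidentity $\phi_n^k$ fixes a ray and the action is free for all $n$, giving (3). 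In the projective case, if $n$ is odd then $n \mid 2k$ forces $n \mid k$, so no nonidentity $\phi_n^k$ fixes a line and the action is free; if $n$ is even then $k = n/2$ satisfies $n \mid 2k$ while $\phi_n^{n/2} \neq \mathrm{id}$, producing fixed lines along the whole $z$-axis, so the action is not free. This is exactly (2).

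For statement (1) it suffices to exhibit a single nonidentity element of $G_{n,std} = \langle \phi_n, \psi \rangle$ with a fixed point in each total space. I would take $g = \psi$ and $p = 0$, which is fixed by all of $G_{n,std}$. A direct computation gives $d\psi_0(\partial_x) = \partial_x$ and $d\psi_0(\partial_y) = -\partial_y$, so on $\xi_0 = \langle \partial_x, \partial_y \rangle$ the map $d\psi_0$ is the reflection fixing the $\partial_x$-axis; hence it fixes the line $[\partial_x] \in \mathbb{P}(\xi_0)$ and the ray $[+\partial_x] \in \mathbb{S}(\xi_0)$. Therefore $\psi$ fixes a point in each of $\mathbb{P}(\xi_{std})$ and $\mathbb{S}(\xi_{std})$, so neither action is free, proving (1). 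The only real bookkeeping, and the main place an error could creep in, is the verification that for $H_{n,std}$ the fixed locus of each $\phi_n^k$ is precisely the $z$-axis and that $d(\phi_n^k)$ there is the claimed planar rotation; once this is checked, the parity dichotomy $n \mid 2k$ versus $n \mid k$ drives all three conclusions.
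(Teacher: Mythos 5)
Your proposal is correct and follows essentially the same route as the paper: reduce to the $z$-axis (the only non-free locus of the underlying action), observe that the differential of $\phi_n^k$ acts on $\xi_{(0,0,z)}=\langle\partial_x,\partial_y\rangle$ as rotation by $2\pi k/n$, and decide freeness via the congruences $n\mid 2k$ for $\mathbb{P}$ versus $n\mid k$ for $\mathbb{S}$, with $\psi$ fixing the line and ray through $\partial_x$ at the origin for part (1). Your identifications $\mathbb{P}(\xi_p)\cong\mathbb{R}/\pi\mathbb{Z}$ and $\mathbb{S}(\xi_p)\cong\mathbb{R}/2\pi\mathbb{Z}$ are exactly the paper's parametrization by $v_\theta$ modulo $\pi$ and $2\pi$, so no further comparison is needed.
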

\begin{proof}
 Recall $\xi_{std} = Ker(dz+x dy - y dx) , G_{n,std} = <\phi_n , \psi> , H_{n,std} = <\phi_n> , \phi_n (x,y,z) = (x \cos(\frac{2 \pi}{n}) - y \sin(\frac{2 \pi}{n}) , x \sin(\frac{2 \pi}{n}) + y \cos(\frac{2 \pi}{n}) , z)$, and $\psi (x,y,z) = (x,-y,-z)$. In particular, $\xi_{std,(0,0,z)} = Ker(dz) = <\partial_x , \partial_y>$.
\begin{description}
\item[{\it (1)}]
$d \psi (\partial_x) = \partial_x , \partial_x \in \xi_{std,0}$. The claim follows it immediately.
\item[{\it (2),(3)}]
 If $(x,y) \neq 0$, $H_{n,std} \,^\curvearrowright \mathbb{R}^3$ is free at $(x,y,z)$. So we have to only consider the case of $x=y=0$.\\ \hspace{5pt}
 Let $v_\theta = (\cos \theta) \partial_x + (\sin \theta) \partial_y$. $\mathbb{P} (\xi_{std,(0,0,z)})$ and $\mathbb{S} (\xi_{std,(0,0,z)})$ are generated by $v_\theta$. Moreover, $v_{\theta_1} \equiv v_{\theta_2}$ in $\mathbb{P} (\xi_{std,(0,0,z)})$ if and only if $\theta_1 \equiv \theta_2$ mod $\pi$. And $v_{\theta_1} \equiv v_{\theta_2}$ in $\mathbb{S} (\xi_{std,(0,0,z)})$ if and only if $\theta_1 \equiv \theta_2$ mod $2 \pi$.
\begin{eqnarray*}
 d \phi_n (v_\theta) &=& (\cos \theta \cos(\frac{2 \pi}{n}) - \sin \theta \sin(\frac{2 \pi}{n})) \partial_x + (\cos \theta \sin(\frac{2 \pi}{n}) + \sin \theta \cos(\frac{2 \pi}{n})) \partial_y \\
  &=& \cos (\theta + \frac{2 \pi}{n}) \partial_x + \sin (\theta + \frac{2 \pi}{n}) \partial_y \\
  &=& v_{\theta + \frac{2 \pi}{n}}.
\end{eqnarray*} \hspace{5pt}
Hence,
\begin{eqnarray*}
 d \phi_n^k (v_\theta) \equiv v_\theta \mbox{ in } \mathbb{P} (\xi_{std,(0,0,z)})
  & \Leftrightarrow & 2k \equiv 0 \mbox{ mod } n. \\
 d \phi_n^k (v_\theta) \equiv v_\theta \mbox{ in } \mathbb{S} (\xi_{std,(0,0,z)})
  & \Leftrightarrow & k \equiv 0 \mbox{ mod } n.
\end{eqnarray*} \hspace{5pt}
So, The action $H_{n,std} \,^\curvearrowright \mathbb{S} (\xi_{std})$ is free for any $n$. And, if $n$ is odd, then The action $H_{n,std} \,^\curvearrowright \mathbb{P} (\xi_{std})$ is free. Otherwise the action $H_{n,std} \,^\curvearrowright \mathbb{P} (\xi_{std})$ is not free
\end{description}
\end{proof}
	\section{Generalization to Lie groupoids}
Roughly speaking, a {\it lie groupoid} is a groupoid object in the category of manifolds and smooth maps.
For example, any holonomy groupoid is a lie groupoid.
In fact, an orbifold can be defined as a {\it proper, \'{e}tale lie groupoid}.
See Appendix \ref{apx1}. \\
In this section, we prove that the Cartan prolongation of a contact \'{e}tale lie groupoid $\mathcal{G}$ is a manifold only if $\mathcal{G}$ is proper, and we define the {\it development map} associated to any Engel manifold.
\begin{defi} \label{contLiegrpd}
A {\it contact \'{e}tale Lie groupoid} (resp. An {\it Engel \'{e}tale Lie groupoid}) $\mathcal{G}$ is an \'{e}tale Lie groupoid $\mathcal{G}$ 
(cf. Deffinition \ref{Liegrpd} and Definition \ref{etale})
such that 
\begin{itemize}
\item The manifolds $\mathcal{G}_0$ and $\mathcal{G}_1$ are contact manifolds (resp. Engel manifolds).
\item The maps $s$, $t$, $i$, $\operatorname{inv}$ and $\operatorname{comp}$ are local cotactomorphisms (resp. local Engel diffeomorphisms).
\end{itemize}
A {\it contact orbifold} (resp. An {\it Engel orbifold}) is a proper contact \'{e}tale Lie groupoid (resp. a proper Engel \'{e}tale Lie groupoid).
\end{defi}
\begin{ex}
Let $(E, \mathcal{D})$ be an Engel manifold.
Define a \'{e}tale Lie groupoid $\mathbb{L}(E, \mathcal{D})$ as the maximal holonomy groupoid of the characteristic foliation of $(E, \mathcal{D})$.
(cf. Example \ref{leafspex})
This is a contact \'{e}tale Lie groupoid by Proposition \ref{charfolcont} and Proposition \ref{charfolhol}.
\end{ex}
We define a morphism.
\begin{defi} \label{contmor}
Let $\mathcal{G}$, $\mathcal{H}$ be contact \'{e}tale Lie groupoids (resp. Engel \'{e}tale Lie groupoids). \\
A {\it strict local contactomorphism} (resp. A {\it strict local Engel diffeomorphism}) $f : \mathcal{G} \rightarrow \mathcal{H}$ is a strict morphism $f : \mathcal{G} \rightarrow \mathcal{H}$ such that the map of objects $f_0 : \mathcal{G}_0 \rightarrow \mathcal{H}_0$ and the map of morphisms $f_1 : \mathcal{G}_1 \rightarrow \mathcal{H}_1$ are local contactomorphisms (resp. local Engel diffeomorphisms). \\
A {\it local contactomorphism} (resp. a {\it local Engel diffeomorphism}) $f : \mathcal{G} \rightarrow \mathcal{H}$ is a generalized map $f (= (e_f, f')) : \mathcal{G} \leftarrow \mathcal{G'} \rightarrow \mathcal{H}$ (cf. Definition \ref{genmap}) such that the Lie groupoid $\mathcal{G'}$ is a contact \'{e}tale Lie groupoid (resp. Engel \'{e}tale Lie groupoid) and that the strict morphisms $e_f$ and $f'$ are strict local contactomorphisms (resp. strict local Engel diffeomorphisms).
\end{defi}
\vspace{5pt} \par
Let $\mathcal{G}$ be a contact \'{e}tale Lie groupoid.
Then, an Engel \'{e}tale Lie groupoid $\mathbb{P}\mathcal{G}$ is determined as follows.
\[
\begin{array}{lcl}
(\mathbb{P}\mathcal{G})_0 & = & \mathbb{P}(\mathcal{G}_0), \\
(\mathbb{P}\mathcal{G})_1 & = & \mathbb{P}(\mathcal{G}_1).
\end{array}
\]
The Engel \'{e}tale Lie groupoid $\mathbb{P}\mathcal{G}$ is called the {\it Cartan prolongation} of the contact \'{e}tale Lie groupoid $\mathcal{G}$. \\
Similarly, we define an Engel \'{e}tale Lie groupoid $\mathbb{S} (\mathcal{G})$.
The Engel \'{e}tale Lie groupoid $\mathbb{S}\mathcal{G}$ is called the {\it oriented Cartan prolongation} of the contact \'{e}tale Lie groupoid $\mathcal{G}$.
\begin{prop}
The Cartan prolongation $\mathbb{P}\mathcal{G}$ (resp. The oriented Cartan prolongation $\mathbb{S}\mathcal{G}$) is an orbifold if and only if the contact \'{e}tale Lie groupoid $\mathcal{G}$ is an orbifold.
\end{prop}
\begin{proof}
The Engel \'{e}tale Lie groupoid $\mathbb{P}\mathcal{G}$ is proper if and only if the contact \'{e}tale Lie groupoid $\mathcal{G}$ is proper, because of the following diagram.
	\[\xymatrix{
		(\mathbb{P}\mathcal{G})_1 \ar[r]^-{s \times t} \ar[d]_-{proper}
		& (\mathbb{P}\mathcal{G})_0 \times (\mathbb{P}\mathcal{G})_0 \ar[d]^-{proper}
	\\
		 \mathcal{G}_1 \ar[r]^-{s \times t}
		&  \mathcal{G}_0 \times \mathcal{G}_0.
	}\]
Therefore, the Engel \'{e}tale Lie groupoid $\mathbb{P}\mathcal{G}$ is an orbifold if and only if the contact \'{e}tale Lie groupoid $\mathcal{G}$ is an orbifold.
\end{proof}
From Theorem \ref{result}, we obtain the following result.
\begin{cor} \label{result2}
Let $\mathcal{G}$ be a $3$-dimensional contact \'{e}tale Lie groupoid.
\begin{enumerate}
\renewcommand{\labelenumi}{(\arabic{enumi})}
\item $\mathbb{P} (\mathcal{G})$ is a manifold if and only if $\mathcal{G}$ is proper, positive, and $|\mathcal{G}_x|$ is odd for all $x \in \mathcal{G}_0$.
\item $\mathbb{S} (\mathcal{G})$ is a manifold if and only if $\mathcal{G}$ is proper and positive.
\end{enumerate}
\end{cor}
At last, we define the development map.\par
Let $(E, \mathcal{D})$ be an Engel manifold with the characteristic foliation $\mathcal{L}$, and let $S \subset E$ be a global section of $\mathcal{L}$. 
Take the weak pull-back $\mathcal{P}$ of $E \rightarrow Hol(E,\mathcal{L})$ and $Hol_S(E, \mathcal{L}) \rightarrow Hol(E,\mathcal{L})$.
(cf. Definition \ref{wpl})
	\[\xymatrix{
		\mathcal{P} \ar[r]^-{\pi} \ar[d]_-{e_\phi}
		& Hol_S(E, \mathcal{L}) \ar[d]
	\\
		E \ar[r]
		& Hol(E, \mathcal{L})
	}\] \par 
$e_\phi : \mathcal{P} \rightarrow E$ is a local diffeomorphism. 
Hence, the Engel structure on $\mathcal{P}$ is induced by $e_\phi$.
Moreover, $(Hol_S(E,\mathcal{L}))_0$ (resp. $(Hol_S(E,\mathcal{L}))_1$) is the leaf space of $\mathcal{P}_0$ (resp. $\mathcal{P}_1$), and $\pi_0$ (resp. $\pi_1$) is the quotient map.
When we denote $\mathcal{P} = (\mathcal{P}_0 , \mathcal{P}_1 , s,t,i,inv,comp)$, we obtain \\
$\mathbb{L}(E, \mathcal{L})_0 = (\mathbb{L} (\mathcal{P}_0) , \mathbb{L} (\mathcal{P}_1) , \mathbb{L} (s) , \mathbb{L} (t) , \mathbb{L} (i) , \mathbb{L} (inv) , \mathbb{L} (comp))$ where $\mathbb{L}$ is the functor defined in Proposition \ref{leafsp}.

Now, we define $\phi' : \mathcal{P} \rightarrow \mathbb{P} (Hol_S(E,\mathcal{L}))$ as $\phi'_0 , \phi'_1$ is the development map associated to $\mathcal{P}_0 , \mathcal{P}_1$, respectively. $\phi = (e_\phi, \mathcal{P} , \phi')$ is a generalized map $\phi : E \dashrightarrow \mathbb{P} (Hol_S(E,\mathcal{L}))$.

\begin{prop} 
 The above $\phi'$ is a strict Engel morphism $\mathcal{P} \rightarrow \mathbb{P} (Hol_S(E,\mathcal{L}))$. That is to say, The above $\phi$ is a (generalized) Engel morphism $(E , \mathcal{D} ) \dashrightarrow \mathbb{P} (Hol_S(E,\mathcal{L}))$. Moreover, this satisfies the universality: For any $3$-dimensional contact \'{e}tale Lie groupoid $\mathcal{G}$ and any Engel morphism $\psi : (E,\mathcal{D}) \dashrightarrow \mathbb{P} (\mathcal{G})$, there exists a unique up to isomorphic contact morphism $\tilde{\psi} : Hol_S(E,\mathcal{L}) \rightarrow \mathcal{G}$ such that $\psi \cong \mathbb{P} (\tilde{\psi}) \circ \phi$.
\end{prop}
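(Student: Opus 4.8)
The plan is to reduce the whole statement to the manifold-level universal property of the development map (Proposition \ref{development}), applied separately to the object space and the arrow space, and then to glue the resulting contact morphisms into a groupoid homomorphism using the naturality of the development map. First, for the claim that $\phi'$ is a strict Engel morphism: by construction $\mathcal{P}_0$ and $\mathcal{P}_1$ are Engel manifolds with trivial characteristic foliation (as $e_\phi$ is a local diffeomorphism and $Hol_S(E,\mathcal{L})_0, Hol_S(E,\mathcal{L})_1$ are their leaf spaces), so $\phi'_0, \phi'_1$ are genuine development maps and hence Engel morphisms by \ref{development}. It then remains to check that $\phi' = (\phi'_0, \phi'_1)$ commutes with the structure maps $s,t,i,inv,comp$ of $\mathcal{P}$. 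This is exactly the naturality of the development map: for any Engel morphism $g$ between Engel manifolds with trivial characteristic foliation one has $\phi_{target} \circ g = \mathbb{P}(\mathbb{L}(g)) \circ \phi_{source}$, which follows from the universal property in \ref{development} together with the functoriality of $\mathbb{L}$ in \ref{leafsp}. Applying this to each structure map yields the homomorphism identities, so $\phi'$ is a strict Engel morphism. Since $e_\phi$ is the projection $pr_3$ of the weak pull-back of a weak equivalence it is itself a weak equivalence, and it is an Engel morphism by the very definition of the Engel structure on $\mathcal{P}$; hence $\phi = (e_\phi, \mathcal{P}, \phi')$ is a generalized Engel morphism.

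For the universal property, fix a contact \'etale Lie groupoid $\mathcal{G}$ and a generalized Engel morphism $\psi = (e_\psi, \mathcal{E}_\psi, \psi')$ with $\psi' : \mathcal{E}_\psi \to \mathbb{P}(\mathcal{G})$. First I would pass to a common refinement: form the weak pull-back $\mathcal{Q}$ of $e_\phi : \mathcal{P} \to E$ and $e_\psi : \mathcal{E}_\psi \to E$, with weak equivalences $a : \mathcal{Q} \to \mathcal{P}$ and $b : \mathcal{Q} \to \mathcal{E}_\psi$. Both are Engel morphisms, $\mathcal{Q}$ has trivial characteristic foliation object- and arrow-wise, and $\psi \cong (e_\phi \circ a, \mathcal{Q}, \psi' \circ b)$. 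Writing $\psi'' \eqdef \psi' \circ b : \mathcal{Q} \to \mathbb{P}(\mathcal{G})$, I apply \ref{development} to the Engel morphisms $\psi''_0 : \mathcal{Q}_0 \to \mathbb{P}(\mathcal{G}_0)$ and $\psi''_1 : \mathcal{Q}_1 \to \mathbb{P}(\mathcal{G}_1)$ to obtain unique contact morphisms $\tilde\psi_0 : \mathbb{L}(\mathcal{Q}_0) \to \mathcal{G}_0$ and $\tilde\psi_1 : \mathbb{L}(\mathcal{Q}_1) \to \mathcal{G}_1$ with $\psi''_i = \mathbb{P}(\tilde\psi_i) \circ \phi_{\mathcal{Q}_i}$.

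Next I would verify that $(\tilde\psi_0, \tilde\psi_1)$ is a groupoid homomorphism $\mathbb{L}(\mathcal{Q}) \to \mathcal{G}$. For each structure map, say the source $s$, both $s_\mathcal{G} \circ \tilde\psi_1$ and $\tilde\psi_0 \circ \mathbb{L}(s)$ are contact morphisms $\mathbb{L}(\mathcal{Q}_1) \to \mathcal{G}_0$; composing each with $\mathbb{P}(-) \circ \phi_{\mathcal{Q}_1}$ and using, on one side, the homomorphism identity for $\psi''$ and, on the other, naturality of the development map ($\mathbb{P}(\mathbb{L}(s)) \circ \phi_{\mathcal{Q}_1} = \phi_{\mathcal{Q}_0} \circ s$), I find both give $\psi''_0 \circ s$. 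The uniqueness clause of \ref{development} then forces $s_\mathcal{G} \circ \tilde\psi_1 = \tilde\psi_0 \circ \mathbb{L}(s)$, and likewise for $t, i, inv, comp$. Hence $\tilde\psi = (\tilde\psi_0, \tilde\psi_1)$ is a strict contact morphism $\mathbb{L}(\mathcal{Q}) \to \mathcal{G}$, and since $\mathbb{L}(a) : \mathbb{L}(\mathcal{Q}) \to \mathbb{L}(\mathcal{P}) = Hol_S(E,\mathcal{L})$ is a weak equivalence of contact \'etale groupoids it defines the desired generalized contact morphism $Hol_S(E,\mathcal{L}) \dashrightarrow \mathcal{G}$. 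The factorization $\psi \cong \mathbb{P}(\tilde\psi) \circ \phi$ then follows by unwinding the composition of generalized maps over $\mathcal{Q}$, where $\psi'' = \mathbb{P}(\tilde\psi) \circ \phi'$ holds strictly object- and arrow-wise; and uniqueness up to isomorphism follows the same way, since any competing $\tilde\psi'$ satisfies $\mathbb{P}(\tilde\psi'_i) \circ \phi_{\mathcal{Q}_i} \cong \psi''_i$ after refinement, whence $\tilde\psi'_i \cong \tilde\psi_i$ by \ref{development}.

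The main obstacle I anticipate is the $2$-categorical bookkeeping rather than any single hard estimate. One must check that the leaf-space functor $\mathbb{L}$ carries weak equivalences of Engel \'etale groupoids to weak equivalences of contact \'etale groupoids, so that both the refinement $\mathbb{L}(a)$ and the output $\tilde\psi$ are legitimate generalized maps, and one must confirm that forming $\mathbb{P}(\tilde\psi) \circ \phi$ through the weak pull-back reproduces $\psi$ only up to the coherent isomorphisms built into the definition of composition of generalized maps. Keeping track of these ``up to isomorphic'' identifications, and ensuring every uniqueness statement is applied on a domain fine enough to compare all the data simultaneously, is where the care lies; the geometric content is entirely contained in the manifold-level statement \ref{development} and the naturality of the development map.
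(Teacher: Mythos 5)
The paper states this proposition without any proof, treating it as an immediate consequence of Proposition \ref{development} applied to the object and arrow manifolds of $\mathcal{P}$, and your proposal is a correct and faithful elaboration of exactly that reduction (componentwise application of \ref{development}, naturality of the development map via $\mathbb{L}$, and the weak-equivalence/refinement bookkeeping for generalized maps). No genuinely different route is taken, and the points you flag as requiring care --- that $\mathbb{L}$ preserves weak equivalences and that the composite $\mathbb{P}(\tilde{\psi})\circ\phi$ is only identified with $\psi$ up to the coherence isomorphisms of weak pull-backs --- are precisely the details the paper leaves implicit.
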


\begin{defi} 
 The above $\phi$ is called the {\it development map} associated to $(E,\mathcal{D})$.
\end{defi}
\begin{rem}
 $e_\phi$ is essentially surjective, and $E$ is a manifold. So, $(e_\phi)_0$ is surjective.
\end{rem}

 This means that any Engel manifold is locally isomorphic to the Cartan prolongation of a contact manifold. From {\it Darboux's Theorem} for contact manifolds (and the lifting $S^1 \, ^\curvearrowright \mathbb{P} (\mathbb{R}^3 , \xi
_{std})$ of the rotary action $S^1 \, ^\curvearrowright (\mathbb{R}^3 , \xi_{std})$), we obtain the Engel version of that. (This is well-known.)

\begin{cor}
 Let $(E , \mathcal{D} )$ be an Engel manifold, and let $p \in E$. There exists a chart $(U;x,y,z,w)$ with $p \in U$ such that $\mathcal{D} |_U = Ker(dy-zdx) \cap Ker(dz-wdx)$.
\end{cor}
	\appendix
	\section{Lie groupoids and orbifolds} \label{apx1}
We define an orbifold as a Lie groupoid.
(See \cite{moerdijk2003introduction}.)
	\subsection{Lie groupoids}
A {\it groupoid} is a category such that all morphisms are isomorphisms.
A groupoid consists of seven data $\mathcal{G} = (\mathcal{G}_0, \mathcal{G}_1, s, t, i, \operatorname{inv}, \operatorname{comp})$, two sets $\mathcal{G}_0$ and $\mathcal{G}_1$ and five maps $s$, $t$, $i$, $\operatorname{inv}$ and $\operatorname{comp}$.
Each data has a name as follows.
\begin{itemize}
\item $\mathcal{G}_0$ is the set of objects.
\item $\mathcal{G}_1$ is the set of arrows.
\item $s : \mathcal{G}_1 \rightarrow \mathcal{G}_0$ is the souce map $(\sigma : x \rightarrow y) \mapsto x$.
\item $t : \mathcal{G}_1 \rightarrow \mathcal{G}_0$ is the target map $(\sigma : x \rightarrow y) \mapsto y$.
\item $i : \mathcal{G}_0 \rightarrow \mathcal{G}_1$ is the identities map.
\item $\operatorname{inv} : \mathcal{G}_1 \rightarrow \mathcal{G}_1$ is the inversion map.
\item $\operatorname{comp} : \mathcal{G}_2 \rightarrow \mathcal{G}_1$ is the composition map.
\end{itemize}
Where, $\mathcal{G}_2$ is the set of composable pairs of arrows.
(In general, $\mathcal{G}_n$ is the set of composable $n$-pairs of arrows.) \par
Roughly speaking, a {\it Lie groupoid} is a groupoid object in the category of manifolds.
However, for technical reasons, we will assume that the souce map is a submersion.
\begin{defi}[Lie groupoids] \label{Liegrpd}
A {\it Lie groupoid} is a groupoid $\mathcal{G} = (\mathcal{G}_0, \mathcal{G}_1, s, t, i, \operatorname{inv}, \operatorname{comp})$ such that 
\begin{itemize}
\item $\mathcal{G}_0$ and $\mathcal{G}_1$ are manifolds.
\item $s$, $t$, $i$, $\operatorname{inv}$ and $\operatorname{comp}$ are smooth.
\item $s$ is a submersion.
\end{itemize}
\end{defi}
\begin{ex} \label{actlie}
Let $M$ be a manifold, and let $G$ be Lie group.
Suppose that $G$ acts on $M$.
The {\it action groupoid} associated with the action $G ^\curvearrowright M$ consists the following data.
\begin{itemize}
\item $(G \ltimes M)_0 = M$.
\item $(G \ltimes M)_1 = G \times M$.
\item $s(\sigma, x) = x$.
\item $t(\sigma, x) = \sigma \cdot x$.
\end{itemize}
\end{ex}
\begin{ex}[holonomy groupoids]
Let $E$ be a $n$-manifold, and let $\mathcal{F}$ be a $p$-dimensional foliation on $E$.
The holonomy groupoid $Hol(E, \mathcal{F})$ of $(E, \mathcal{F})$ consists the following data.
\begin{itemize}
\item $Hol(E, \mathcal{F})_0 = E$.
\item An arrow $\sigma : x \rightarrow y$ ($x, y \in E$) is a holonomy from $x$ to $y$ of the foliation $\mathcal{F}$.
\end{itemize}
Then $Hol(E, \mathcal{F})$ is a Lie groupoid.
See \cite{moerdijk2003introduction}.
\end{ex}
A (strict) morphism between Lie groupoids is a functor, which is composed of smooth maps.
Moreover, a (strict) $2$-morphism is a natural transformation, which is composed of smooth maps.
These determine a $2$-category {\bf LieGrpd} of Lie groupoids.
\begin{defi} \label{etale}
A Lie groupoid $\mathcal{G}$ is {\it \'{e}tale} if the souce map $s$ is a local diffeomorphism.
\end{defi}
\begin{ex} \label{actdiscrete}
Let $M$ be a manifold, and let $G$ be Lie group.
Suppose that $G$ acts on $M$.
If $G$ is discrete, then the action groupoid associated with the action $G ^\curvearrowright M$ is \'{e}tale.
(cf. Example \ref{actlie})
\end{ex}
\begin{ex}[\v{C}ech groupoids]
Let $M$ be a manifold, and let $\mathcal{U} (= \{ U_\lambda \})$ be an open covering of the manifold $M$.
The {\it \v{C}ech groupoid} $\mathcal{G}$ of  the open covering $\mathcal{U}$ consists the following data.
\begin{itemize}
\item $\mathcal{G}_0 = \coprod_\lambda U_\lambda$.
\item $\mathcal{G}_1 = \coprod_{\lambda, \mu} U_{\lambda\mu}$.
\item $s : \coprod_{\lambda, \mu} U_{\lambda\mu} \rightarrow \coprod_\lambda U_\lambda$.
\item $t : \coprod_{\lambda, \mu} U_{\lambda\mu} \rightarrow \coprod_\mu U_\mu$.
\end{itemize}
Then, $\mathcal{G}$ is an \'{e}tale Lie groupoid. \par
In particular, if $\mathcal{U} = \{ M \}$, then the \v{C}ech groupoid is called the {\it trivial groupoid} of $M$.
\end{ex}
\begin{ex}[holonomy groupoids on sections]
Let $E$ be a $n$-manifold, and let $\mathcal{F}$ be a $p$-dimensional foliation on $E$.
A {\it section} of the foliation $\mathcal{F}$ is a $q$-manifold ($q = n-p$) immersed in $E$ intersecting transversally with the foliation $\mathcal{F}$.
A {\it global section} of the foliation $\mathcal{F}$ is a section of the foliation $\mathcal{F}$ intersecting transversally with all leaves.
The {\it maximal section} of the foliation $\mathcal{F}$ is a global section $\coprod_{S \in \mathcal{S}} S$ where $\mathcal{S}$ is a set of all sections of the foliation $\mathcal{F}$. \par
Let $S \subset E$ be a global section of the foliation $\mathcal{F}$.
The holonomy groupoid $Hol_S(E, \mathcal{F})$ on $S$ consists the following data.
\begin{itemize}
\item $Hol_S(E, \mathcal{F})_0 = S$.
\item An arrow $\sigma : x \rightarrow y$ ($x, y \in S$) is a holonomy from $x$ to $y$ of the foliation $\mathcal{F}$.
\end{itemize}
Then $Hol_S(E, \mathcal{F})$ is an \'{e}tale Lie groupoid. \par
If $S$ is the maximal section, then we call $Hol_S(E, \mathcal{F})$ the {\it maximal holonomy groupoid} of the foliation $\mathcal{F}$.
\end{ex}
\begin{rem}
In fact, the above class $\mathcal{S}$ is not a set.
Therefore, strictly speaking, the definition of a {\it section} needs to be modified as follows. \par
A {\it section} of the foliation $\mathcal{F}$ is a immersion $S \rightarrow E$ intersecting transversally with the foliation $\mathcal{F}$ such that $S$ is a $q$-submanifold $S \subset \mathbb{R}^k$ ($q = n-p$) for a sufficiently large integer $k$.
\end{rem}
We want to regard Lie groupoids as generalized spaces.
We have to regard certain morphisms as isomorphisms, for example, refinements of \v{C}ech groupoids.
Then, a {\it generalized morphism} is defined by a right fraction.
\begin{defi}[weak equivalences]
Let $\mathcal{G}$ and $\mathcal{H}$ be Lie groupoids.
A morphism $f : \mathcal{G} \rightarrow \mathcal{H}$ is a {\it weak equivalence} if it saticefies the following condition.
\begin{description}
\item[(essentially surjective)]
\mbox{}\\
$t \circ pr_1 : \mathcal{H}_1 \times_{\mathcal{H}_0} \mathcal{G}_0 \rightarrow \mathcal{H}_0$ is a surjective submersion, where 
\[
\mathcal{H}_1 \times_{\mathcal{H}_0} \mathcal{G}_0 = \{ (\sigma, x) \in \mathcal{H}_1 \times \mathcal{G}_0 \, | \, s(\sigma) = f(x) \}.
\]
\item[(fully faithful)]
\mbox{}\\
The following diagram is a pull-back.
	\[\xymatrix{
		\mathcal{G}_1 \ar[r]^-{f} \ar[d]_-{s \times t}
		& \mathcal{H}_1 \ar[d]^-{s \times t}
	\\
		 \mathcal{G}_0 \times \mathcal{G}_0 \ar[r]^-{f \times f}
		&  \mathcal{H}_0 \times \mathcal{H}_0
	}\]
\end{description}
\end{defi}
\begin{ex}[refinements]
Let $M$ be a manifold, and let $\mathcal{U}$ and $\mathcal{V}$ be open coverings of $M$.
Suppose that $\mathcal{V}$ is a refinment of $\mathcal{U}$.
Let $\mathcal{G}$ (resp. $\mathcal{H}$) be the \v{C}ech groupoid of $\mathcal{U}$ (resp. $\mathcal{V}$).
Then, the inclusion morphism $\mathcal{H} \rightarrow \mathcal{G}$ is a weak equivalence.
\end{ex}
\begin{defi}[generalized morphisms] \label{genmap}
Let $\mathcal{G}$ and $\mathcal{H}$ be Lie groupoids.
A {\it generalized morphism} $f : \mathcal{G} \rightarrow \mathcal{H}$ is a pair $f = (e_f, f')$ of a weak equivalence $e_f : \cdot \rightarrow \mathcal{G}$ and a strict morphism $f' : \cdot \rightarrow \mathcal{H}$.
(See the following.)
\[
f : \mathcal{G} \overset{e_f}{\leftarrow} \cdot \overset{f'}{\rightarrow} \mathcal{H}
\]
We call the above diagram a {\it right roof}.
Then, we write $f = f'e_f^{-1}$.
We call this notation a {\it right fraction}.
A generalized morphism $f'e_f^{-1}$ is a {\it weak equivalence} if the morphism $f'$ is a weak equivalence.
A Lie groupoid $\mathcal{G}$ is weak equivalnt to a Lie groupoid $\mathcal{H}$ if there exists a weak equivalence $\mathcal{G} \rightarrow \mathcal{H}$.
\end{defi}
\begin{ex} \label{leafspex}
Let $E$ be a $n$-manifold, and let $\mathcal{F}$ be a $p$-dimensional foliation on $E$.
Let $S$ and $T$ be global sections of $\mathcal{F}$, and let $\mathcal{G}$ (resp. $\mathcal{H}$) be the holonomy groupoid on $S$ (resp. $T$).
Then, the Lie groupoid $\mathcal{G}$ is weak equivalnt to the Lie groupoid $\mathcal{H}$.
A representative of the weak equivalence class of the holonomy groupoids is regarded as the leaf space of the foliation $\mathcal{F}$.
\end{ex}
\begin{rem}
Strictly speaking, a {\it generalized morphism} is a suitable equivalence class of roofs.
In fact, the class $W$ of weak equivalences admits caluculous of right fractions.
A generalized morphism is a morphism of a localization ${\bf LieGrpd}[W^{-1}]$.
(See \cite{gabriel2012calculus}.)
Remark, ${\bf LieGrpd}[W^{-1}]$ is a $2$-category.
\end{rem}
\vspace{10pt}
We introduce the {\it weak pull-back}. \\
Let $f : \mathcal{G} \rightarrow \mathcal{K}$ and $g : \mathcal{H} \rightarrow \mathcal{K}$ be strict morphisms between Lie groupoids. 
Let $P_0 (= \mathcal{G}_0 \times_{\mathcal{K}_0} \mathcal{K}_1 \times_{\mathcal{K}_0} \mathcal{H}_0) = \{ (x,\sigma,y) \in \mathcal{G}_0 \times \mathcal{K}_1 \times \mathcal{H}_0 \, | \, f(x) = s(\sigma) , t(\sigma) = g(y) \}$.
The set $P_0$ is {\it not} always a manifold. 
But, if $f \times g : \mathcal{G}_0 \times \mathcal{H}_0 \rightarrow \mathcal{K}_0 \times \mathcal{K}_0$ and $s \times t : \mathcal{K}_1 \rightarrow \mathcal{K}_0 \times \mathcal{K}_0$ intersect transversally, then the set $P_0$ is a manifold. 
In this case, the set $P_1 (= \mathcal{G}_1 \times_{\mathcal{K}_0} \mathcal{K}_1 \times_{\mathcal{K}_0} \mathcal{H}_1) = \{ (\gamma,\sigma,\delta) \in \mathcal{G}_0 \times \mathcal{K}_1 \times \mathcal{H}_0 \, | \, f(s(\gamma)) = s(\sigma) , t(\sigma) = g(s(\delta)) \}$ is also a manifold. 
Moreover, $\mathcal{P} = (\mathcal{P}_0 , \mathcal{P}_1)$ is a Lie groupoid.
\begin{rem}
The above condition involves the cases that either $f_0$ or $g_0$ is a submersion, and that either $f$ or $g$ is an essentially surjective. Because of $\mathcal{G}_0 \times_{\mathcal{K}_0} \mathcal{K}_1 \times_{\mathcal{K}_0} \mathcal{H}_0 \cong \mathcal{G}_0 \times_{\mathcal{K}_0} (\mathcal{K}_1 \times_{\mathcal{K}_0} \mathcal{H}_0) \cong (\mathcal{G}_0 \times_{\mathcal{K}_0} \mathcal{K}_1) \times_{\mathcal{K}_0} \mathcal{H}_0$.
\end{rem}
\begin{defi} \label{wpl}
The above Lie groupoid $\mathcal{P}$ is the {\it weak pull-back} of $f$ and $g$. 
And the following diagram is called {\it weak pull-back diagram}.
	\[\xymatrix{
		\mathcal{P} \ar[r]^-{pr_3} \ar[d]_-{pr_1}
		& \mathcal{H} \ar[d]^-{g}
	\\
		\mathcal{G} \ar[r]^-{f}
		& \mathcal{K}
	}\] 
\end{defi}
\begin{rem}
 Weak pull-back diagrams are {\it not} commutative but the above $f \circ pr_1$ and $g \circ pr_3$ are isomorphic.
\end{rem}
\begin{prop}[\cite{moerdijk2003introduction}]
In above case, if $f$ is a weak equivalence, then $pr_3 : \mathcal{P} \rightarrow \mathcal{H}$ is also weak equvalence.
\end{prop}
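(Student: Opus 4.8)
The plan is to verify directly that $pr_3 : \mathcal{P} \to \mathcal{H}$ satisfies the two defining conditions of a weak equivalence, essential surjectivity and full faithfulness, deducing each from the corresponding property of $f$. Throughout I write an object of $\mathcal{P}$ as a triple $(x,\sigma,y)$ with $f(x) = s(\sigma)$ and $t(\sigma) = g(y)$, so that $\sigma$ is an arrow $f(x) \to g(y)$ in $\mathcal{K}$; on objects $pr_3$ is the projection $(x,\sigma,y) \mapsto y$, and on arrows it sends a morphism of $\mathcal{P}$ to its $\mathcal{H}$-component $\delta$. I will use freely that a pullback of a surjective submersion along any smooth map is again a surjective submersion, and that all the fibre products occurring are manifolds by the transversality hypothesis built into the weak pull-back.

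First I would treat essential surjectivity. The key observation is that the object-level projection $pr_3 : \mathcal{P}_0 \to \mathcal{H}_0$ is already a surjective submersion: writing $W \eqdef \mathcal{K}_1 \times_{\mathcal{K}_0} \mathcal{G}_0 = \{(\sigma,x) : s(\sigma) = f(x)\}$ and $w : W \to \mathcal{K}_0$, $w(\sigma,x) = t(\sigma)$, essential surjectivity of $f$ says exactly that $w$ is a surjective submersion, and one identifies $\mathcal{P}_0 \cong W \times_{\mathcal{K}_0} \mathcal{H}_0$ (fibred by $w$ and $g$), so $pr_3 : \mathcal{P}_0 \to \mathcal{H}_0$ is the pullback of $w$ along $g$ and hence a surjective submersion. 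The essential-surjectivity map for $pr_3$ is $\mathcal{H}_1 \times_{\mathcal{H}_0} \mathcal{P}_0 \to \mathcal{H}_0$, $(\tau,p) \mapsto t(\tau)$, where the fibre product is taken over $s : \mathcal{H}_1 \to \mathcal{H}_0$ and $pr_3 : \mathcal{P}_0 \to \mathcal{H}_0$. This map factors as $t \circ pr_{\mathcal{H}_1}$; the projection $pr_{\mathcal{H}_1}$ to $\mathcal{H}_1$ is the pullback of the surjective submersion $pr_3 : \mathcal{P}_0 \to \mathcal{H}_0$ along $s$, hence a surjective submersion, while $t : \mathcal{H}_1 \to \mathcal{H}_0$ is a surjective submersion since it admits the section $i$. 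Composing gives the required surjective submersion.

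Next I would treat full faithfulness, i.e.\ that the square with $pr_3 : \mathcal{P}_1 \to \mathcal{H}_1$ on top and $s \times t$ on the sides is a pull-back, equivalently that the evident smooth map $\Phi : \mathcal{P}_1 \to \mathcal{H}_1 \times_{\mathcal{H}_0 \times \mathcal{H}_0} (\mathcal{P}_0 \times \mathcal{P}_0)$ is a diffeomorphism. A morphism of $\mathcal{P}$ from $(x,\sigma,y)$ to $(x',\sigma',y')$ is a pair $(\gamma,\delta) \in \mathcal{G}_1 \times \mathcal{H}_1$ with the prescribed sources and targets making the square $\sigma' \circ f(\gamma) = g(\delta) \circ \sigma$ commute, and $\Phi$ records $(\delta, (x,\sigma,y),(x',\sigma',y'))$. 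To invert $\Phi$ I must, given the two objects and an arrow $\delta : y \to y'$ of $\mathcal{H}$, recover $\gamma$ uniquely and smoothly. The commuting-square condition forces $f(\gamma) = (\sigma')^{-1} \circ g(\delta) \circ \sigma$, a definite arrow $f(x) \to f(x')$ of $\mathcal{K}$; since $f$ is fully faithful, the square $\mathcal{G}_1 \cong \mathcal{K}_1 \times_{\mathcal{K}_0 \times \mathcal{K}_0} (\mathcal{G}_0 \times \mathcal{G}_0)$ is a pull-back of manifolds, so there is a unique such $\gamma$ with $s(\gamma) = x$, $t(\gamma) = x'$, depending smoothly on $(\delta,\sigma,\sigma',x,x')$. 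This produces a smooth inverse to $\Phi$, proving full faithfulness.

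The routine part is the surjective-submersion bookkeeping in the first step, which is formal once $\mathcal{P}_0 \cong W \times_{\mathcal{K}_0} \mathcal{H}_0$ is written down. The main obstacle I expect lies in the full-faithfulness step: one must (i) pin down precisely the commuting-square condition that defines the arrows of the weak pull-back, so that the reconstruction formula $f(\gamma) = (\sigma')^{-1} g(\delta) \sigma$ is correct, and (ii) extract the \emph{smoothness} of $\gamma$ --- not merely its set-theoretic existence and uniqueness --- from the fact that full faithfulness of $f$ is an isomorphism of \emph{manifolds}. Obtaining this smooth inverse is the one place where the manifold structure, rather than mere bijectivity, is genuinely used.
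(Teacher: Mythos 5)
The paper gives no proof of this proposition---it is quoted from \cite{moerdijk2003introduction} as a known fact---so there is nothing internal to compare against; judged on its own, your argument is correct and is essentially the standard proof from that reference. Both halves check out: the identification $\mathcal{P}_0 \cong (\mathcal{K}_1 \times_{\mathcal{K}_0} \mathcal{G}_0) \times_{\mathcal{K}_0} \mathcal{H}_0$ correctly reduces essential surjectivity of $pr_3$ to stability of surjective submersions under pullback and composition with $t$, and the reconstruction $f(\gamma) = (\sigma')^{-1}\circ g(\delta)\circ\sigma$ together with the pullback square expressing full faithfulness of $f$ yields the required smooth inverse, with smoothness coming for free from smoothness of $comp$ and $inv$ in $\mathcal{K}$ and the fact that the fully-faithfulness square is a pullback of manifolds.
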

Let $g : \mathcal{G} \dashrightarrow \mathcal{H} , f : \mathcal{H} \dashrightarrow \mathcal{K}$ be generalized maps. 
And, take the weak pull-back $\mathcal{P}$ of $e_f$ and $g'$. 
The diagram is the following:
	\[\xymatrix{
		\mathcal{G} \ar@{-->}[rr]^-{g}
		&& \mathcal{H} \ar@{-->}[rr]^-{f}
		&& \mathcal{K}
	\\
		& \mathcal{G}_g \ar[ul]^-{e_g}  \ar[ur]_-{g'}
		&& \mathcal{H}_f  \ar[ul]^-{e_f}  \ar[ur]_-{f'}
	\\
		&& \mathcal{P}  \ar[ul]^-{e}  \ar[ur]_-{g''}
	}\] 
We define $f \circ g = (e_g \circ e,\mathcal{P} ,f' \circ g'')$.
\begin{rem}
If $f , g$, and $h$ are strict morphisms, then strictly $(f \circ g) \circ h = f \circ (g \circ h)$. 
On the other hand, If $f , g$, and $h$ are generalized maps, then $(f \circ g) \circ h \cong f \circ (g \circ h)$.
\end{rem}
	\subsection{Orbifolds}
\begin{defi} \label{defproper}
A Lie groupoid $\mathcal{G}$ is {\it proper} if $s \times t : \mathcal{G}_1 \rightarrow \mathcal{G}_0 \times \mathcal{G}_0$ is a proper map.
(i.e. The pull-back of any compact subset of $\mathcal{G}_0 \times \mathcal{G}_0$ by $s \times t$ is compact.) \\
An {\it orbifold} is a proper \'{e}tale Lie groupoid. \\
An {\it orbifold map} is a generalized map between orbifolds. \\
An {\it isomorphism} between orbifolds is a weak equivalence between orbifolds.
\end{defi}
\begin{rem}
Suppose two Lie groupids $\mathcal{G}$ and $\mathcal{H}$ are weak equivalent.
Then, the Lie groupoid $\mathcal{G}$ is proper if and only if the Lie groupoid $\mathcal{H}$ is proper.
Howevevr, even if the Lie groupoid $\mathcal{G}$ is \'{e}tale, it doesn't mean that the Lie groupoid $\mathcal{H}$ is \'{e}tale.
See \cite{moerdijk2003introduction}.
\end{rem}
\begin{ex}
Let $M$ be a manifold, and let $G$ be Lie group.
Suppose that $G$ acts on $M$.
If $G$ is compact, then the action groupoid $G \ltimes M$ associated with the action $G ^\curvearrowright M$ is proper.
(cf. Example \ref{actlie})
If $G$ is compact and discrete (i.e. $G$ is a finite group), then the action groupoid $G \ltimes M$ associated with the action $G ^\curvearrowright M$ is an orbifold.
(cf. Example \ref{actdiscrete})
\end{ex}
Any orbifold is a pasting of the above example.
\begin{defi}
Let $\mathcal{G}$ be a Lie groupoid, and let $x \in \mathcal{G}_0$ be an object.
The {\it isotoropy group} $G_x$ at $x$ is an isomorphism group $\{ \sigma : x \rightarrow x \}$.
\end{defi}
\begin{prop} \label{orbchart}
Let $\mathcal{G}$ be an orbifold, and let $x \in \mathcal{G}_0$ be an object.
Then, there exists an open neighborhood $U$ of the point $x$ such that the restriction $\mathcal{G}_U$ of the orbifold $\mathcal{G}$ on the open set $U$ is isomorphic to the action groupoid $G_x \ltimes U$.
Where the group $G_x$ is the isotoropy group at $x$.
\end{prop}
\begin{proof}
See Proposition 5.30 in \cite{moerdijk2003introduction}.
\end{proof}
Finally, we discuss manifolds.
\begin{defi}
A Lie groupoid $\mathcal{G}$ is {\it free} if $s \times t : \mathcal{G}_1 \rightarrow \mathcal{G}_0 \times \mathcal{G}_0$ is injective.
\end{defi}
\begin{rem}
A Lie groupoid is free if and only if any isotoropy group is trivial. \\
Suppose two Lie groupids $\mathcal{G}$ and $\mathcal{H}$ are weak equivalent.
Then, the Lie groupoid $\mathcal{G}$ is free if and only if the Lie groupoid $\mathcal{H}$ is free.
\end{rem}
\begin{prop}
An orbifold $\mathcal{G}$ is isomorphic to the trivial groupoid of a manifold $M$ if and only if the orbifold $\mathcal{G}$ is free.
\end{prop}
\begin{proof}
Suppose that an orbifold $\mathcal{G}$ is isomorphic to the trivial groupoid of a manifold $M$.
The trivial groupoid of the manifold $M$ is free.
Then, the orbifold $\mathcal{G}$ is free. \\
Conversely, suppose an orbifold $\mathcal{G}$ is free.
(Then, any isotopory group is trivial.)
Let $M$ be a quotient set $\mathcal{G}_0 / \mathcal{G}_1$.
There exists a manifold structure on $M$ such that the quotient map $\mathcal{G}_0 \rightarrow M$ is a surjective local diffeomorphism because of Proposition \ref{orbchart}.
Then, the quotient map $\mathcal{G}_0 \rightarrow M$ defines a weak equivalence from the orbifold $\mathcal{G}$ to the trivial groupoid of the manifold $M$.
\end{proof}
That's why we often refer to a free orbifold as a {\it manifold} as well.


\bibliography{engel, contact, symplectic, category, homotopy, foliation, Lie_groupoid}

\begin{thebibliography}{1}

\bibitem{adachi2002engel}
Jiro Adachi.
\newblock Engel structures with trivial characteristic foliations.
\newblock {\em Algebraic \& Geometric Topology}, 2(1):239--255, 2002.

\bibitem{gabriel2012calculus}
Peter Gabriel and Michel Zisman.
\newblock {\em Calculus of fractions and homotopy theory}, volume~35.
\newblock Springer Science \& Business Media, 2012.

\bibitem{geiges2008introduction}
Hansj{\"o}rg Geiges.
\newblock {\em An introduction to contact topology}, volume 109.
\newblock Cambridge University Press, 2008.

\bibitem{Daniel2013}
Daniel Herr.
\newblock {\em Open Books on Contact Three Orbifolds}.
\newblock PhD thesis, 2013.

\bibitem{mac2013categories}
Saunders Mac~Lane.
\newblock {\em Categories for the working mathematician}, volume~5.
\newblock Springer Science \& Business Media, 2013.

\bibitem{mcduff2017introduction}
Dusa McDuff and Dietmar Salamon.
\newblock {\em Introduction to symplectic topology}.
\newblock Oxford University Press, 2017.

\bibitem{moerdijk2003introduction}
Ieke Moerdijk and Janez Mrcun.
\newblock {\em Introduction to foliations and Lie groupoids}, volume~91.
\newblock Cambridge University Press, 2003.

\bibitem{montgomery1999engel}
Richard Montgomery.
\newblock Engel deformations and contact structures.
\newblock {\em Translations of the American Mathematical Society-Series 2},
  196:103--118, 1999.

\bibitem{vogel2009existence}
Thomas Vogel.
\newblock Existence of engel structures.
\newblock {\em Annals of mathematics}, pages 79--137, 2009.

\end{thebibliography}
\bibliographystyle{plain}


\end{document}